\theoremstyle{plain}
\newtheorem{theorem}{Theorem}[section]
\newtheorem{lemma}[theorem]{Lemma}
\newtheorem{cor}[theorem]{Corollary}
\newtheorem{lem}{Lemma}[section]
\theoremstyle{definition}
\newtheorem{remark}[theorem]{Remark}
\numberwithin{equation}{section}
\def\be{\begin{equation}}
\def\ee{\end{equation}}
\author {Xumin Jiang}
\address{Department of Mathematics\\
Rutgers University\\
New Brunswick, NJ 08901}  \email{xj60@math.rutgers.edu}
\date{}
\begin{document}
\maketitle

\begin{abstract}
We study the regularity of the isometric embedding $X:(B(O, r), g) \rightarrow (\mathbb{R}^3, g_{can})$ of a 2-ball with nonnegatively curved $C^4$ metric into $\mathbb{R}^3$. Under the assumption that $X$ can be expressed in the graph form, we show $X \in C^{2,1}$ near $P$, which is optimal by Iaia's example.
\end{abstract}

\section{Introduction}
Weyl posted the following problem in 1916 \cite{Weyl}: Consider a positively curved 2-sphere $(S^2, g)$. Does there exist a global $C^2$ isometric embedding $X:(S^2, g) \rightarrow (\mathbb{R}^3, g_{can})$, where $g_{can}$ is the standard flat metric on $\mathbb{R}^3$? Weyl himself suggested the continuity method to solve this problem and obtained a priori estimates up to the second derivatives. Lewy \cite{Lewy} solved the problem under the assumption the $g$ is analytic. In 1953, Nirenberg  \cite{Nirenberg}  solved the Weyl problem under the mild smoothness assumption that the metric is $C^4$.

P. Guan and Y.Y. Li \cite{GuanLi} considered the question that if the Gauss curvature of the metric $g$ is nonnegative, whether does $(S^2, g)$ still have a smooth isometric embedding? They proved in \cite{GuanLi} that for any $C^4$ metric $g$ on $S^2$, there is a global $C^{1,1}$ isometric embedding into $\mathbb{R}^3$. Examples in Iaia \cite{Iaia} show that for some analytic metrics with positive Gauss curvature on $S^2$ except at one point, there exists only a $C^{2,1}$ but not a $C^3$ global isometric embedding into $\mathbb{R}^3$. 

Then a natural question, posted in \cite{GuanLi}, is that if a smooth metric $g$ on $S^2$ has nonnegative Gauss curvature, whether does it have a $C^{2, \alpha}$, for some $0<\alpha<1$, or even a $C^{2,1}$ global isometric embedding? To study this problem, we can look at the degenerate Monge-Amp\`ere equation
\begin{align}\label{eq-MA}
\det(D^2 u)= k
\end{align}
where $k(x,y)\geq 0$, in $B_r(O)$ for small $r>0$. Guan \cite{Guan} considered the case $k\in C^\infty(B_r(O))$, and
\begin{align}\label{eq-KBound}
\frac{1}{A}(x^{2n}+ By^{2m}) \leq k(x,y) \leq A(x^{2n}+ By^{2m}),
\end{align}
for some $A>0, B\geq 0$ and positive integers $n\leq m$. It's shown in \cite{Guan} that a $C^{1,1}$ solution $u$ of \eqref{eq-MA} is smooth near the origin if \eqref{eq-KBound} holds, and if, additionally 
\begin{align}\label{eq-UXX}
u_{xx}\geq C_0 >0.
\end{align}
Guan and Sawyer \cite{GuanSawyer} improved this result by replacing \eqref{eq-UXX} by a weaker condition $\Delta u\geq C_0 >0.$

Daskalopoulos and Savin \cite{DaskSavin} considered \eqref{eq-MA} in the case that $k$ is radial. It's shown that in \cite{DaskSavin} that if $k(x,y)= (x^2+y^2)^\frac{\delta}{2}$ for some $\delta>0$, then $u\in C^{2,\epsilon}$ for a small $\epsilon$ which depends on $\delta$.

\medskip
In this paper, we  consider a $C^{1,1}$  isometric embedding $X:(B(O, r), g) \rightarrow (\mathbb{R}^3, g_{can})$, where $B(O, r)$ is a ball  in $\mathbb{R}^2$, centred at the origin with radius $r$, and $g$ is a $C^4$ metric with Gauss curvature $k\geq 0$. We regard the image of $X$,  as the graph of a function $u$. In fact, we can assume the isometric embedding is of form
\begin{align}\label{eq-GeneEmb}
X: (x, y) &\longmapsto (\alpha(x, y), \beta(x, y), u(x, y)),
\end{align}
where $k=0$ at $O$. Under normalization, we may assume
\begin{align}\label{eq-GraphU}
u(0,0)=0, u_x(0,0)=u_y(0,0)=0.
\end{align}
Notice \eqref{eq-GraphU} implies that the $\alpha\beta$-plane in $\mathbb{R}^3$, is tangent to the image $X(S^2)$ at $X(O)$.

An example is that 
\begin{align*}
\alpha=x, \beta=y, u=r^3= (x^2+y^2)^\frac{3}{2}.
\end{align*}
Then $g= dx^2+ dy^2 + du^2$ is smooth in $x,y$, $k(x, y)= 18 (x^2+y^2) >0$ except at the origin, but the embedding is only $C^{2,1}$.

First we have the following theorem when $\alpha (x, y) \equiv x, \beta (x, y)\equiv y$, and the Gauss curvature  only degenerates at a single point.
\begin{theorem}\label{thm-Main}
Assume that we have a $C^4$ metrc $g$ on a ball $B(O, r) \subseteq \mathbb{R}^2$, for some $r>0$, and that the Gauss curvature $k>0$ in $B(O, r)-\{O\}$. Assume  that a $C^{1,1}$ isometric embedding $X:(B(O, r), g) \rightarrow (\mathbb{R}^3, g_{can})$ is of form
\begin{align}\label{eq-GImb}
X: (x, y) &\longmapsto (x, y, u(x, y)),
\end{align}
under local coordinates $x,y$  such that \eqref{eq-GraphU} holds.
Then $X \in C^{2,1}(B(O, r))$.
\end{theorem}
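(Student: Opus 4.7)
My plan is to combine two complementary pieces of information that $u$ inherits from the graph embedding. From $g_{ij} = \partial_iX\cdot\partial_jX$ with $X=(x,y,u)$ one reads off the algebraic identities
\begin{equation*}
u_x^2 = g_{11}-1,\qquad u_xu_y = g_{12},\qquad u_y^2 = g_{22}-1,
\end{equation*}
which display three quadratic functions of $\nabla u$ as $C^4$ data. The Gauss equation for a graph yields the Monge-Amp\`ere equation
\begin{equation*}
\det D^2u = k(x,y)(1+|\nabla u|^2)^2
\end{equation*}
with $k\in C^2$, $k(O)=0$, and $k>0$ on the punctured ball. Since the Monge-Amp\`ere operator is uniformly elliptic on compact subsets of $B(O,r)\setminus\{O\}$, classical interior regularity for convex solutions immediately gives $u\in C^\infty(B(O,r)\setminus\{O\})$, so the only task is to extend $D^2u$ as a Lipschitz field across $O$.

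\textbf{Exploiting the structural identities.} The next step is to differentiate the algebraic identities to obtain the linear relations
\begin{equation*}
2u_xu_{xx}=(g_{11})_x,\quad 2u_xu_{xy}=(g_{11})_y,\quad 2u_yu_{xy}=(g_{22})_x,\quad 2u_yu_{yy}=(g_{22})_y.
\end{equation*}
Away from the zero loci of $u_x$ and $u_y$, this yields explicit formulas for the second derivatives of $u$ as ratios of $C^3$ data by the Lipschitz denominators $u_x$ or $u_y$. To upgrade these formulas to uniform Lipschitz bounds at $O$, I would Taylor-expand $u_x$ and $u_y$ at $O$ (in the Peano sense, which is legitimate because $u\in C^{1,1}$), match the expansions against the Taylor expansions of the $C^4$ functions $g_{ij}-\delta_{ij}$, and use convexity of $u$ together with $k>0$ off $O$ to ensure that the zero sets of $u_x$, $u_y$ are smooth curves through $O$. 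Matching of leading Taylor coefficients should then show that each ratio $(g_{11})_x/u_x$, etc., extends Lipschitz across those degeneracy curves.

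\textbf{The main obstacle.} The hardest case will be $D^2u(O)=0$, when $\nabla u$ vanishes at $O$ to order at least two and the data $g_{ij}-\delta_{ij}$ vanish to order at least four, so that Lipschitz control of $D^2u$ depends on cancellation among fourth-order Taylor coefficients that the $C^4$ hypothesis on $g$ alone does not guarantee. Here the Monge-Amp\`ere equation must be used as a decisive algebraic constraint: substituting the quotient formulas for $u_{xx},u_{xy},u_{yy}$ into $\det D^2u=k(1+|\nabla u|^2)^2$ produces a compatibility relation on the Taylor coefficients that should rule out the non-Lipschitz obstruction. The complementary rank-one case is reduced by a preliminary orthogonal rotation aligning the nontrivial eigenvector with the $x$-axis, after which only the single quotient $(g_{11})_x/u_x$ through the smooth curve $\{u_x=0\}$ needs analysis, and the same Taylor-matching device applies. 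Once Lipschitz bounds on $D^2u$ through $O$ are established, they combine with the interior $C^\infty$ bounds off $O$ to conclude $u\in C^{2,1}(B(O,r))$.
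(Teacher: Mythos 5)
Your starting point coincides with the paper's: reading off $u_x^2=g_{11}-1$, $u_xu_y=g_{12}$, $u_y^2=g_{22}-1$ (and hence $u_z^2\in C^4$ for every constant direction $z$) is exactly the structural input the paper uses. But the heart of the matter is a quantitative one-dimensional fact that your plan never supplies: if $f\geq 0$ is $C^4$ on an interval, $f(0)=0$, and $f'(x)x\geq 0$ (which is what convexity of $u$ in that direction gives after normalization), then $\sqrt{f}\in C^{2,1}$ with $|(\sqrt f)''|\leq C\|f\|_{C^4}^{1/2}$. The paper proves this by a two-case argument (comparing $f$ with $x^4$, using the classical bound $|f'|\leq C\|f\|_{C^2}^{1/2}\sqrt f$ and an interpolation lemma at scale $\epsilon=f(x)^{1/4}$), applies it along every line in the $x$, $y$, $\tfrac{x+y}{2}$, $\tfrac{x-y}{2}$ directions, and recovers the mixed third derivatives $u_{xxy},u_{xyy}$ from linear combinations of $u_{zzz},u_{www},u_{xxx},u_{yyy}$. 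Your substitute --- Peano--Taylor expansion of $u_x,u_y$ at $O$, matching against the Taylor coefficients of $g_{ij}-\delta_{ij}$, plus a ``compatibility relation'' extracted from the Monge--Amp\`ere equation --- cannot close the argument, for three concrete reasons. First, $k$ (and hence $f=u_x^2$) may vanish to infinite order at $O$, so no finite collection of Taylor coefficients at $O$ controls the quotient $(g_{11})_x/(2u_x)$ near the degeneracy; the required estimate is pointwise-uniform at every nearby point where $f$ is small, not an expansion at the single point $O$. Second, the degeneracy set $\{u_x=0\}$ is a whole curve through $O$ which the paper shows is merely continuous --- its slope $-u_{xy}/u_{xx}$ can blow up as one approaches $O$ --- so your assertion that convexity and $k>0$ off $O$ make these zero sets smooth curves is unjustified. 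Third, the paper's own counterexample $f=e^{-1/x^2}\sin^2(1/x)+e^{-2/x^2}$ shows that a nonnegative smooth $f$ with all Taylor coefficients at $0$ equal to zero need not have $\sqrt f\in C^2$; what saves the day is the monotonicity $f'(x)x\geq 0$, which your scheme never uses quantitatively. The final step of your plan, where the Monge--Amp\`ere constraint ``should rule out the non-Lipschitz obstruction,'' is at this point a hope rather than an argument, and the mixed second derivative $u_{xy}$ (whose quotient formulas degenerate on two intersecting curves) is not addressed at all.
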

Here we only need the sign of the Gauss curvature $k$.
By the example $u=r^3$, we see that the $C^{2,1}$ smoothness of $X$ is optimal.

Enlightened by Guan and Sawyer \cite{GuanSawyer}, if $\Delta u$, or the mean curvature $H$, has a uniform positive lower bound near but not necessarily at the origin, we have
\begin{cor}\label{cor-Mean}
Assume the same assumptions as  in Theorem \ref{thm-Main}. In addition, we assume that $g\in C^\infty$, and $\Delta u = u_{xx} + u_{yy} > C_0 > 0$ for some constant $C_0$, around $O$ but not necessarily at $O$, then $X\in C^\infty(B_g(P, r))$.
\end{cor}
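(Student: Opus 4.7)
The plan is to build on the $C^{2,1}$ regularity of $u$ from Theorem \ref{thm-Main} and upgrade it to $C^\infty$ via a partial Legendre transform, following the strategy of Guan and Sawyer \cite{GuanSawyer}. Since $g\in C^\infty$, the Gauss curvature $k$ is smooth, strictly positive on $B(O,r)\setminus\{O\}$ and vanishing at $O$, and the graph satisfies $\det D^2u = k(1+|\nabla u|^2)^2$. On any compact subset of $B(O,r)\setminus\{O\}$ this equation is uniformly elliptic on the convex solution $u$ (since $\det D^2u>0$ there), so standard Evans--Krylov and Schauder bootstrapping give $u\in C^\infty(B(O,r)\setminus\{O\})$ immediately. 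The entire difficulty is concentrated at the single point $O$.

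At $O$, continuity of $\Delta u$ upgrades the hypothesis to $\Delta u(O)\geq C_0>0$, while $\det D^2u(O)=k(O)=0$. Convexity then forces the eigenvalues of $D^2u(O)$ to be $0$ and some $\lambda\geq C_0$. Rotating the $(x,y)$-plane so that the positive-eigenvalue direction is the $x$-axis, we get $u_{xx}(O)=\lambda$, $u_{xy}(O)=u_{yy}(O)=0$, and by continuity of $u_{xx}$ there is a neighborhood $U$ of $O$ on which $u_{xx}\geq C_0/2>0$. On $U$ I would introduce the partial Legendre coordinates $s=u_x(x,y)$, $t=y$; this is a $C^{1,1}$ diffeomorphism onto its image $U^*$. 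The Legendre dual $u^*(s,t):=xs-u$ satisfies
\[
u^*_s=x,\quad u^*_t=-u_y,\quad u^*_{ss}=\frac{1}{u_{xx}},\quad u^*_{st}=-\frac{u_{xy}}{u_{xx}},\quad u^*_{tt}=-\frac{\det D^2u}{u_{xx}},
\]
and the Monge--Amp\`ere equation transforms into
\[
u^*_{tt}+A(s,t)\,u^*_{ss}=0,\qquad A(s,t):=k\bigl(u^*_s(s,t),t\bigr)\bigl(1+s^2+u^*_t(s,t)^2\bigr)^2\geq 0,
\]
a linear-in-second-derivatives equation in which $u^*_{ss}$ is bounded above and below by positive constants on $U^*$ and $A$ depends smoothly on $(s,t,u^*_s,u^*_t)$.

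The bootstrap is then Schauder-type. Inductively, assume $u^*\in C^{k,\alpha}(U^*)$ for some $k\geq 2$, starting from $k=2$ by Theorem \ref{thm-Main}. Then $A\in C^{k-1,\alpha}$; away from the image $O^*$ of $O$ the equation is uniformly elliptic with $C^{k-1,\alpha}$-coefficients, so linear Schauder gains one derivative. Regularity at $O^*$ is where I would use the Guan--Sawyer device: differentiate the equation in $s$ and $t$, use $u^*_{ss}\geq c>0$ to obtain linear equations for $u^*_s$ and $u^*_t$ that remain uniformly elliptic in the $s$-direction, and absorb the $t$-degeneracy into lower-order terms controlled by the smoothness of $k$ and the $C^{k,\alpha}$-bound on $u^*$. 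Iterating in $k$ yields $u^*\in C^\infty(U^*)$, and inverting the partial Legendre transform gives $u\in C^\infty$ near $O$.

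The principal obstacle is this last step: carrying out the Guan--Sawyer iteration in the absence of the explicit vanishing-rate bound \eqref{eq-KBound} on $k$ imposed in \cite{GuanSawyer}. The structural feature that survives without \eqref{eq-KBound} is the uniform positive lower bound on $u^*_{ss}=1/u_{xx}$, which together with the smooth dependence of $A$ on $(s,t,u^*_s,u^*_t)$ should allow the iteration to close regardless of the precise vanishing order of $k$ at $O$; making this precise is the technical heart of the argument.
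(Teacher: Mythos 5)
There is a genuine gap, and you have in fact flagged it yourself: the entire argument hinges on the claim that the Guan--Sawyer iteration ``should allow the iteration to close regardless of the precise vanishing order of $k$ at $O$,'' and this is precisely what is not proved. The subelliptic estimates in \cite{Guan} and \cite{GuanSawyer} that make the bootstrap work near the degenerate point rely essentially on the finite-type hypothesis \eqref{eq-KBound}; indeed \cite{Guan} assumes \emph{both} \eqref{eq-KBound} and the condition $u_{xx}\geq C_0>0$ that you recover here, so the lower bound on $u^*_{ss}=1/u_{xx}$ alone is not the structural feature that closes their argument. In the present corollary $k$ may vanish to infinite order at $O$ (the paper emphasizes this possibility elsewhere, e.g.\ after Corollary \ref{cor-MA}), so the degenerate equation $u^*_{tt}+A\,u^*_{ss}=0$ has a coefficient $A\geq 0$ with no quantitative nondegeneracy, and a Schauder-type gain of one derivative at $O^*$ in each step of the induction is exactly the assertion that needs a proof. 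As written, the proposal reduces the corollary to an open technical problem rather than solving it.

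The paper's proof avoids the PDE bootstrap entirely and is worth contrasting. Since $g\in C^\infty$, the quantities $u_x^2=g_{xx}-1$, $u_y^2=g_{yy}-1$, $u_xu_y=g_{xy}$ are smooth, hence $u_z^2$ is smooth for \emph{every} constant direction $z=lx+my$. After rotating so that no chosen coordinate direction coincides with the degenerate direction $\tilde y$, one has $u_{zz}\geq C_1>0$ along each such direction, so on each line $f=u_z^2=Mz^2+Rz^3$ with $M\geq C_1^2$ and $R$ smooth, whence $u_z=z\sqrt{M+Rz}$ is smooth with bounds depending only on $M$ and $\|f\|_{C^{k+3}}$. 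This controls all pure directional derivatives $D_z^ku$; taking sufficiently many directions ($x$, $y$, $z=\frac{x+y}{2}$, $w=\frac{x-y}{2}$, $z_1=\frac{x+2y}{5}$, $w_1=\frac{2x-y}{5}$, \dots) and polarizing recovers every mixed partial, giving $u\in C^k$ for each $k$. This exploits the isometric-embedding structure (smoothness of $u_z^2$), which is strictly stronger information than the Monge--Amp\`ere equation you work with, and it is insensitive to the vanishing order of $k$ at $O$. If you want to salvage your approach, you would need to either prove a degenerate Schauder estimate for $\partial_t^2+A\partial_s^2$ with $A\geq0$ vanishing possibly to infinite order, or import the smoothness of $u_x^2$ as the paper does.
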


For the Monge-Amp\`ere equation \eqref{eq-MA}, in the case that $u$ is radial, 
we have the following corollary showig $u\in C^{2,1}$, which is optimal by the example $u=r^3$. This result is expected to be true. We list it here as a quick corollary of lemmas in Section \ref{sec-OneDim}.
\begin{cor}\label{cor-MA}
Assume that a $C^{1,1}$ convex function $u$ satisfies \eqref{eq-MA} in $B(O, \rho)$, the ball of radius $\rho$ centered at the origin, and $k\geq 0$ in $B(O, \rho)-O$. In addition, we assume that $u=\Phi(r)$ for some function $\Phi$, where $r=\sqrt{x^2+y^2}$ , and $k\in C^3(B(O, \rho))$. Then $u\in C^{2,1}(B(O, \rho))$.
\end{cor}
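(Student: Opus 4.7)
The plan is to exploit the radial symmetry to reduce the Monge--Amp\`ere equation to a one-variable ODE, integrate it explicitly, and then apply the lemmas of Section~\ref{sec-OneDim} to transfer the resulting regularity back to $u$.

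First I would carry out the elementary computation that, for $u=\Phi(r)$,
\[
u_{xx}=\Phi''(r)\frac{x^{2}}{r^{2}}+\frac{\Phi'(r)}{r}\frac{y^{2}}{r^{2}},\quad
u_{yy}=\Phi''(r)\frac{y^{2}}{r^{2}}+\frac{\Phi'(r)}{r}\frac{x^{2}}{r^{2}},\quad
u_{xy}=\Bigl(\Phi''(r)-\frac{\Phi'(r)}{r}\Bigr)\frac{xy}{r^{2}},
\]
so $\det(D^{2}u)=\Phi''(r)\Phi'(r)/r$; hence $k$ must itself be radial, and \eqref{eq-MA} becomes the ODE $\Phi'\Phi''=rk(r)$ on $[0,\rho)$. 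Convexity of $u$ and $C^{1}$ regularity at the origin force $\Phi'(0)=0$ and $\Phi',\Phi''\ge 0$, so integrating once yields $\Phi'(r)=\sqrt{K(r)}$ with $K(r):=2\int_{0}^{r}sk(s)\,ds$, and therefore $\Phi''(r)=rk(r)/\sqrt{K(r)}$ wherever $K(r)>0$.

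Next I would invoke the one-dimensional lemmas of Section~\ref{sec-OneDim} to conclude that $\Phi\in C^{2,1}([0,\rho])$. The key input is that $k$, being $C^{3}$ and radial on a planar ball, has vanishing odd Taylor coefficients at $r=0$ up to order $3$; in particular $k(r)=k(0)+\tfrac12 k''(0)r^{2}+o(r^{3})$ with $k''(0)\ge 0$. If $k(0)>0$ then $K(r)\sim k(0)r^{2}$ and both $\Phi''$ and $\Phi'/r$ are as smooth as $\sqrt{k}$; if instead $k(0)=0$, the same expansion gives $K(r)\gtrsim r^{4}$ when $k''(0)>0$ (and a higher-order bound otherwise), from which one extracts $\Phi''(r)=O(r)$ and the Lipschitz continuity of $\Phi''$ by a careful differentiation of the quotient $rk/\sqrt{K}$.

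Finally, I would transfer the one-dimensional regularity back to $u$. Setting $a(r):=\Phi''(r)$ and $b(r):=\Phi'(r)/r$, both are Lipschitz on $[0,\rho]$ and share the boundary value $a(0)=b(0)=\Phi''(0)$ by L'H\^opital. Because the weights $x^{2}/r^{2}, y^{2}/r^{2}, xy/r^{2}$ are bounded and smooth off the origin, a direct computation using the Lipschitz bounds on $a,b$---and the cancellation $a(0)=b(0)$, which kills the leading singularity in $u_{xy}$---shows that $u_{xx}, u_{yy}, u_{xy}$ are all Lipschitz on $B(O,\rho)$. The hard part is the middle step, because $\Phi''=rk/\sqrt{K}$ is a $0/0$ indeterminate form at the origin whenever $k(0)=0$, and proving $\Phi''\in\mathrm{Lip}$ under only $k\in C^{3}$ is precisely what the lemmas of Section~\ref{sec-OneDim} are designed to handle.
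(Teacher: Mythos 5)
Your proposal follows essentially the same route as the paper: reduce to the radial ODE $\Phi'\Phi''=rk(r)$, integrate to get $(\Phi')^2=2\int_0^r sk(s)\,ds$, observe that this antiderivative is $C^4$ across $r=0$ (the paper phrases your ``vanishing odd Taylor coefficients'' remark as the even extension $\Psi(r)=k(r,0)$ being $C^3$ with $\Psi(r)r\cdot r\ge 0$), and then feed $f=(\Phi')^2$ into the one-dimensional $C^{2,1}$ machinery of Section~\ref{sec-OneDim} via \eqref{eq-C21S}. The only caution is that your intermediate Taylor-expansion case analysis ($k(0)>0$, $k''(0)>0$, \dots) cannot by itself close the argument when $k$ vanishes to infinite order, but you correctly identify that this is exactly what Theorem~\ref{thm-C21} handles, and your explicit treatment of the transfer from $\Phi\in C^{2,1}$ back to $u$ via $a(0)=b(0)$ is in fact more careful than the paper's.
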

Here $k$ could vanish at infinite order at $r=0$. We see that $\Phi_r$ is the square root of a $C^4$ function.

Under the general nonnegative Gauss curvature, Pogorelov's counterexample in \cite{Pogorelov} shows that a $C^{2,1}$ metric with nonnegative Gauss curvature may not have a $C^{2}$ isometric embedding. However, given a $C^4$ metric, under the graph setting, our result is positive.
\begin{theorem}\label{thm-Main1}
Assume that we have a $C^4$ metrc $g$ on a ball $B(O, r) \subseteq \mathbb{R}^2$, for some $r\geq 0$, with Gauss curvature $k\geq 0$. Assume that  a $C^{1,1}$ isometric embedding $X:(B(O, r), g) \rightarrow (\mathbb{R}^3, g_{can})$ is of form
\begin{align*}
X: (x, y) &\longmapsto (x, y, u(x, y)),
\end{align*}
under local coordinates $x,y$  such that the normalization \eqref{eq-GraphU} holds. If in addition, $u$ is (weakly) convex, then $X \in C^{2,1}(B(O, r^\prime))$, for any $r^\prime<r$. 
\end{theorem}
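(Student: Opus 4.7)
The plan is to adapt the strategy of Theorem~\ref{thm-Main}, using weak convexity of $u$ as a substitute for the ``$k>0$ in the punctured ball'' assumption there, and to split the analysis pointwise according to the local structure of the zero set of the Gauss curvature.

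Since $C^{2,1}$ is a local property, I fix $P \in B(O,r')$ and aim to show $u \in C^{2,1}$ in a neighborhood of $P$. If $k(P)>0$, then the Monge--Amp\`ere equation $\det D^2u = k(1+|\nabla u|^2)^2$ is uniformly elliptic near $P$ (since $u$ has bounded Hessian from $C^{1,1}$ and $k$ is uniformly positive there), and convexity together with standard Schauder estimates gives $u \in C^{3,\alpha}$ near $P$, which is stronger than what is required. If $k(P)=0$ but $P$ is an isolated zero of $k$, a rigid motion of $\mathbb{R}^3$ sends $X(P)$ to the origin with horizontal tangent plane, preserving the graph form, the $C^4$-smoothness of $g$, the $C^{1,1}$-norm of the new height function, and the condition $k>0$ in the punctured neighborhood; Theorem~\ref{thm-Main} then gives the required $C^{2,1}$ regularity near $P$.

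The main case is when $P$ lies in the closure of $\{k=0\}\setminus\{P\}$. Here weak convexity replaces the isolated-degeneracy assumption of Theorem~\ref{thm-Main}. Since $u$ is convex with $\det D^2u = 0$ on $Z := \{k=0\}$, the structure theorem for weakly convex functions says that through each point of $Z$ the domain contains a segment on which $u$ is affine, with $D^2u$ having a null eigenvector along that segment. I would adapt the proof of Theorem~\ref{thm-Main}, which proceeds via one-dimensional estimates along rays through the degenerate point (see Section~\ref{sec-OneDim}): along any ray from $P$ meeting $\{k>0\}$ the argument of Theorem~\ref{thm-Main} goes through essentially unchanged; along rays contained in $Z$, $u$ is affine by the ruling property and $D^2u$ is trivially Lipschitz. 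The boundary $\partial Z$ is handled by matching the one-dimensional estimates across it.

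The hard part will be this matching across $\partial Z$. Concretely, the Lipschitz estimate for $D^2u$ obtained from the $\{k>0\}$ side must be made to depend only on $\|g\|_{C^4}$, $\|u\|_{C^{1,1}}$, and the convexity modulus, not on how $Z$ is situated inside $B(O,r')$ (in particular, not on the distance from $P$ to $\partial Z$). Establishing such a quantitative version of Theorem~\ref{thm-Main} requires inspecting its proof and extracting uniform constants. A secondary simplification is that weak convexity of $u$ automatically supplies a one-sided bound (namely zero) on the Hessian, which makes the relevant compactness arguments on the approximating problems go through without incident.
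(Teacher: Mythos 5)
There is a genuine gap. Your main case (points of $\{k=0\}$ that are not isolated zeros) rests on a ``ruling'' property that is false at the level of generality you need: the structure theorem for convex functions with vanishing Monge--Amp\`ere measure produces affine segments only through points of an \emph{open} set on which $\det D^2u\equiv0$, whereas here $Z=\{k=0\}$ is merely closed and may have empty interior (a single point, a curve, a Cantor-type set), in which case no affine segment through a point of $Z$ need exist. More decisively, the step you yourself flag as ``the hard part'' --- a Lipschitz bound on $D^2u$ from the $\{k>0\}$ side that is uniform up to $\partial Z$ and independent of how $Z$ sits in the ball --- is the entire content of the theorem in the difficult case, and it cannot be obtained by citing Theorem \ref{thm-Main}: the constants in that proof (the quantities $\eta_1,\eta_2$ measuring $|u_x|$ at a fixed distance from the degenerate point) degenerate as the base point approaches other zeros of $k$. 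You also give no mechanism for recovering the mixed third derivatives $u_{xxy},u_{xyy}$ from purely directional information, nor for handling the points where third derivatives fail to exist at all.

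The paper's proof avoids the zero set of $k$ entirely and never decomposes by the local structure of $Z$. It is a purely one-dimensional argument run on every line in the four directions $x$, $y$, $z=\frac{x+y}{2}$, $w=\frac{x-y}{2}$: on each such line $f=u_x^2$ (resp.\ $u_y^2$, $u_z^2$, $u_w^2$) is a nonnegative $C^4$ function because $g\in C^4$, and convexity gives $u_{xx}\ge0$, which is exactly the hypothesis $f'(x)x\ge0$ of Theorem \ref{thm-C21} after centering at a zero of $u_x$; the resulting bound $|u_{xxx}|\le C(\|g\|_{C^4},r)$ holds wherever $u_{xxx}$ exists, with a constant completely independent of the location and structure of $\{f=0\}$. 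A measure-zero lemma for the ``touch points'' (Lemma \ref{lem-Meas}) shows $u_{xxx}$ exists almost everywhere, and mollification (Lemma \ref{lem-Smooth}) together with the algebraic identity \eqref{eq-UZZZ} converts the four directional bounds into a uniform $C^3$ bound on $u_\tau$, whence $u\in C^{2,1}$ by Arzela--Ascoli. To repair your proposal you would essentially have to abandon the case analysis on $Z$ and adopt this line-by-line scheme, since it is the only place where a bound uniform across $\partial Z$ is actually produced.
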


The paper is organized as follows. In Section \ref{sec-OneDim}, we will discuss the one dimensional model. In Section \ref{sec-TwoDim}, we will prove Theorem \ref{thm-Main}. In Section \ref{sec-Cor}, we prove Corollary \ref{cor-Mean}. In Section \ref{sec-NonNeg}, we prove Theorem \ref{thm-Main1}.

\bigskip
Many thanks to Yanyan Li for introducing me this problem and the whole project. Thanks to Zheng-Chao Han for helpful discussions.

\bigskip
\section{A model in  dimension one}\label{sec-OneDim}
In this section, we derive $C^{2,1}$ estimates of $u$ in an one dimension model.

Assume a nonnegative function $u=u(x) \in C^{1}(2I)$, where $I= [-1,1]$, and $u(0)=0$. In addition, we assume that $f=u_x^2$ is in $C^4(2I)$, and $f^\prime(x) x\geq 0$. The goal is to show $u\in C^{2,1}(I)$. The condition  that $f^\prime(x) x\geq 0 $ is necessary, since there is a nonnegative function 
\begin{eqnarray*}f(x)=
e^{-\frac{1}{x^2}} \sin^2 (\frac{1}{x}) + e^{-\frac{2}{x^2}}
\end{eqnarray*}
which is smooth, vanishes at infinite order at $x=0$, and $(\sqrt{f})^{\prime\prime}$ blows up when $x \rightarrow 0$. In fact, $\sqrt{f}\in C^{1,\alpha}$ for any $0<\alpha<1$, and $(\sqrt{f})^{\prime\prime}\leq \frac{C}{|x|^4}$ for some fixed $C$. $x^4 \sqrt{f}$ is a $C^{1,1}$ function. It shows that $|u^{\prime\prime\prime}|= | (\sqrt{f})^{\prime\prime}|$ is not bounded when $x$ tend to zero.

We have the following well known lemma,
\begin{lemma}\label{lem-grad}
 Assume $f \in C^2(2I)$, where $I=[-1, 1]$. $f\geq 0$ for $x  \in 2I$. Then for every $x\in I$, \begin{align*}
|f^\prime(x)| \leq \frac{3}{2}||f||_{C^2(2I)}^\frac{1}{2} f(x)^\frac{1}{2}.
\end{align*}
\end{lemma}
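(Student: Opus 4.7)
The plan is to prove this classical Glaeser-type inequality by combining a Taylor expansion of $f$ at the given point $x\in I$ with the nonnegativity of $f$ on the larger interval $2I$, and then optimizing the resulting one-parameter family of bounds. The key idea is that since $f\ge 0$, the quadratic $h\mapsto f(x)+f'(x)h+\tfrac12 f''(\xi)h^2$ cannot dip below zero, which forces $|f'(x)|$ to be small whenever $f(x)$ is small.

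First I would apply Taylor's theorem at $x\in I$: for any $h$ with $x+h\in 2I$ there is a $\xi$ between $x$ and $x+h$ so that
\begin{equation*}
0\le f(x+h)=f(x)+f'(x)h+\tfrac12 f''(\xi)h^{2}.
\end{equation*}
Setting $h=-t\,\mathrm{sgn}(f'(x))$ with $t\in(0,1]$ (this is legal because $x\in[-1,1]$ and $2I=[-2,2]$, so $x+h\in 2I$ automatically) and using $|f''(\xi)|\le \|f''\|_{\infty}\le \|f\|_{C^{2}(2I)}$, this yields
\begin{equation*}
|f'(x)|\ \le\ \frac{f(x)}{t}+\frac{\|f''\|_{\infty}}{2}\,t.
\end{equation*}

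Next I would optimize $t$ over $(0,1]$. The unconstrained minimizer of the right-hand side is $t^{\ast}=\sqrt{2f(x)/\|f''\|_{\infty}}$. If $t^{\ast}\le 1$ I plug it in to get $|f'(x)|\le\sqrt{2\|f''\|_{\infty}f(x)}\le\sqrt{2}\,\|f\|_{C^{2}(2I)}^{1/2}f(x)^{1/2}$. If instead $t^{\ast}>1$, then $\|f''\|_{\infty}<2f(x)$, so taking $t=1$ gives $|f'(x)|\le f(x)+\tfrac12\|f''\|_{\infty}<2f(x)$; pairing this with the trivial bound $|f'(x)|\le \|f'\|_{\infty}\le \|f\|_{C^{2}(2I)}$ yields
\begin{equation*}
|f'(x)|^{2}\ \le\ 2f(x)\,\|f'\|_{\infty}\ \le\ 2\|f\|_{C^{2}(2I)}f(x).
\end{equation*}
In both cases $|f'(x)|\le\sqrt{2}\,\|f\|_{C^{2}(2I)}^{1/2}f(x)^{1/2}\le\tfrac32\|f\|_{C^{2}(2I)}^{1/2}f(x)^{1/2}$, which is the desired estimate.

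There is no real obstacle here; the only conceptual point worth flagging is the role of the buffer between $I$ and $2I$. We need the Taylor expansion to be valid at some step size $t$ large enough to make the two terms $f(x)/t$ and $\|f''\|_{\infty}t/2$ competitive, and the radius $1$ of free motion around any $x\in I$ provides exactly that, which is why the hypothesis controls $\|f\|_{C^{2}}$ on the doubled interval rather than on $I$ itself.
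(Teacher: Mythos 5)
Your proof is correct and follows essentially the same route as the paper: Taylor expansion at $x$ combined with nonnegativity of $f$ on the doubled interval, with the step size taken of order $f(x)^{1/2}$ and a separate (trivial) treatment of the regime where $f(x)$ is large relative to $\|f\|_{C^2}$. The only difference is cosmetic — you optimize over $t$ and obtain the slightly sharper constant $\sqrt{2}$, whereas the paper normalizes $\|f\|_{C^2(2I)}=1$ and plugs in $t=f(x)^{1/2}$ directly to get $\tfrac{3}{2}$.
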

\begin{proof}
Assume first $||f||_{C^2(2I)=1}$. We only need to consider at $x$ where $f(x)>0$.
If $f(x)\geq 1$ at some $x\in I$, then
$|(\sqrt{f})^\prime(x)|= |\frac{f^\prime(x)}{2\sqrt{f(x)}}|\leq \frac{1}{2}||f||_{C^2(2I)}=\frac{1}{2}$. We assume $0<f(x)<1$ at some $x \in  I$, 
then
by the Taylor expansion, if $x+t\in I$,
\begin{align*}
f(x+t)= f(x) + f^\prime(x) t + \frac{f^{\prime\prime}(\tilde{x})}{2}t^2\geq 0,
\end{align*}
for some $\tilde{x}$ between $x$ and $x+t$.
Then
\begin{align*}
 f^\prime(x) t \geq -f(x)- \frac{f^{\prime\prime}(\tilde{x})}{2}t^2.
\end{align*}
When $ f^\prime(x)>0$, we set $t=  -f(x)^\frac{1}{2}$, and divide $t$ on both hand sides to derive,
\begin{align*}
0< f^\prime(x)\leq \frac{3}{2} f(x)^\frac{1}{2}.
\end{align*}
When $ f^\prime(x)<0$, we set $t= -f(x)^\frac{1}{2}$, and divide $t$ on both hand sides to derive,
\begin{align*}
0>f^\prime(x)\geq  -\frac{3}{2} f(x)^\frac{1}{2}.
\end{align*}
Notice the choice of $t$ is valid, since $|x+t|\leq |x|+|t|\leq |x|+\sqrt{f}<2$.

So we derived,
\begin{align*}
|f^\prime(x)| \leq \frac{3}{2} f(x)^\frac{1}{2}.
\end{align*}

If general, when $||f||_{C^2(2I)}\neq 1$, by a scaling, we see that
\begin{align*}
|f^\prime(x)| \leq \frac{3}{2}||f||_{C^2(2I)}^\frac{1}{2} f(x)^\frac{1}{2}.
\end{align*}
\end{proof}

The following is a standard interpolation lemma,
\begin{lemma}\label{lem-G}
Assume that $G(y)$ is a $C^4$ function defined on $[-1,0]$ such that $G(y)\geq 0$, and is non-decreasing. Then there exist universal constants $A, B$ such that
\begin{align*}
|G^\prime(0)|+|G^{\prime\prime}(0)|+ |G^{\prime\prime\prime}(0)| \leq AG(0)+B\max_{y\in [-1,0]} |G^{(4)}(y)|.
\end{align*} 
\end{lemma}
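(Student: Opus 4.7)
The plan is to expand $G$ in a Taylor series about $0$ and evaluate at three well-chosen points in $[-1,0)$, producing a Vandermonde-type linear system whose solution expresses $G^\prime(0)$, $G^{\prime\prime}(0)$, $G^{\prime\prime\prime}(0)$ in terms of $G(0)$ and $M := \max_{y\in[-1,0]}|G^{(4)}(y)|$.

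First, I would extract the crucial consequence of the hypotheses: since $G\geq 0$ and $G$ is non-decreasing on $[-1,0]$, one has $0\leq G(y)\leq G(0)$ for every $y\in[-1,0]$, and in particular $\|G\|_{L^\infty[-1,0]}=G(0)$. Setting $a_k = G^{(k)}(0)/k!$ for $k=1,2,3$, Taylor's theorem with the Lagrange remainder yields
\[
G(y) - G(0) \;=\; a_1 y + a_2 y^2 + a_3 y^3 + r(y), \qquad |r(y)|\leq \tfrac{M}{24}\,y^4,
\]
valid for all $y\in[-1,0]$; in particular $|r(y)|\leq M/24$.

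Next, I would evaluate this identity at three distinct points $y_1,y_2,y_3\in[-1,0)$, for concreteness $y_1=-1$, $y_2=-\tfrac{1}{2}$, $y_3=-\tfrac{1}{3}$. Using $|G(y_i)-G(0)|\leq G(0)$, one obtains the linear system
\[
\sum_{j=1}^{3} y_i^{\,j}\, a_j \;=\; G(y_i)-G(0)-r(y_i), \qquad i=1,2,3,
\]
whose right-hand side is bounded by $G(0)+M/24$ in absolute value. The coefficient matrix $V=(y_i^{\,j})_{i,j=1,2,3}$ factors as $\operatorname{diag}(y_1,y_2,y_3)$ times a standard Vandermonde on the $y_i$, so its determinant equals $y_1y_2y_3\prod_{i<j}(y_j-y_i)\neq 0$ and is a universal nonzero number for the chosen points. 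Consequently $V^{-1}$ has universally bounded entries, and solving gives $|a_k|\leq C\bigl(G(0)+M\bigr)$ with $C$ universal. Multiplying through by $k!$ collects the three derivatives and produces universal constants $A$ and $B$ as claimed.

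I do not expect any real obstacle: once the monotonicity and non-negativity are combined into the sup bound $\|G\|_{L^\infty[-1,0]}=G(0)$, the statement is just a Landau--Kolmogorov-type interpolation between $\|G\|_\infty$ and $\|G^{(4)}\|_\infty$ at the boundary point $0$, and the Taylor--Vandermonde route above makes the constants fully explicit without further analytic input.
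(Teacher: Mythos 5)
Your proof is correct and follows essentially the same route as the paper: Taylor expansion of $G$ at $0$ evaluated at three points in $[-1,0)$, the bound $|G(y_i)-G(0)|\le G(0)$ coming from monotonicity and nonnegativity, and inversion of the resulting (Vandermonde-type) linear system with universal constants. The only cosmetic difference is your choice of sample points ($-1,-\tfrac12,-\tfrac13$ versus the paper's $-1,-\tfrac12,-\tfrac14$) and your more explicit justification that the coefficient matrix is invertible.
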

\begin{proof}
By the Taylor expansion,
\begin{align*}
G(-1)&= G(0)+ G^\prime(0) (-1) + \frac{G^{\prime\prime}(0)}{2}(-1)^2+\frac{G^{\prime\prime\prime}(0)}{6}(-1)^3+\frac{G^{(4)}(\xi_1)}{24}(-1)^4,\\
G(-\frac{1}{2})&= G(0)+ G^\prime(0) (-\frac{1}{2}) + \frac{G^{\prime\prime}(0)}{2}(-\frac{1}{2})^2+\frac{G^{\prime\prime\prime}(0)}{6}(-\frac{1}{2})^3+\frac{G^{(4)}(\xi_2)}{24}(-\frac{1}{2})^4,\\
G(-\frac{1}{4})&= G(0)+ G^\prime(0) (-\frac{1}{4}) + \frac{G^{\prime\prime}(0)}{2}(-\frac{1}{4})^2+\frac{G^{\prime\prime\prime}(0)}{6}(-\frac{1}{4})^3+\frac{G^{(4)}(\xi_3)}{24}(-\frac{1}{4})^4,\\
\end{align*}
for some $-1<\xi_1<0, -\frac{1}{2}<\xi_2< 0, -\frac{1}{4}< \xi_3<0$. Regard these as 3 linear equations in $G^\prime(0) ,G^{\prime\prime}(0) ,G^{\prime \prime \prime}(0) $, and solve them in terms of $G(-1)-G(0), G(-\frac{1}{2})-G(0), G(-\frac{1}{4})-G(0), G^{(4)} (\xi_1), G^{(4)} (\xi_2), G^{(4)} (\xi_3)$.
Since $G(x)$ is non-decreasing and nonnegative, we have $|G(-1)-G(0)|$, $|G(-\frac{1}{2})-G(0)|$, $|G(-\frac{1}{4})-G(0)| \leq G(0)$. Then the lemma follows.
\end{proof}

Next is the key Theorem in this section.
\begin{theorem}\label{thm-MainEstm}
Assume that  $u$ is a $C^{1}$ function such that $f=u_x^2 \in C^4(2I)$. In addition, assume that $u_x(0)=0$ and $f^\prime(x)x$ is nonnegative. 

 Then $u$ is  $C^5$ in $I -\{f=0\}$, and for every $x \in I -\{f=0\}$,
\begin{align}\label{eq-Main1D0}
| u_{xxx}(x)|\leq C ||f||_{C^4(2I)}^\frac{1}{2},
\end{align}
for some universal constant $C$.
\end{theorem}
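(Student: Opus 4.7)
The plan is to handle the $C^5$ regularity and the uniform $C^{2,1}$ estimate \eqref{eq-Main1D0} separately. For the regularity, if $f(x_0)>0$ then by continuity $f>0$ on a neighborhood $J$ of $x_0$, and on $J$ the continuity of $u_x$ combined with $u_x^2=f$ forces $u_x=\epsilon\sqrt{f}$ for a locally constant sign $\epsilon\in\{\pm 1\}$. Since $\sqrt{f}\in C^4(J)$, this yields $u\in C^5(J)$, covering all of $I\setminus\{f=0\}$.

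The harder part is the uniform estimate. I would reduce it to the pointwise inequality $|Q(x_0)|\leq C\|f\|_{C^4(2I)}^{1/2}f(x_0)^{3/2}$, where $Q:=2ff''-(f')^2$, since a direct computation gives $u_{xxx}=\pm Q/(4f^{3/2})$. By symmetry assume $x_0>0$, and let $x_*\in[0,x_0)$ be the largest zero of $f$ in $[0,x_0]$. The assumption $f'(x)x\geq 0$ makes $f$ nonnegative and non-decreasing on $[x_*,x_0]$, and $f(x_*)=f'(x_*)=0$, the latter because $x_*$ is a local minimum of the nonnegative $f$. Lemma \ref{lem-grad} applied to $f$ on $2I$ yields $(f')^2\leq\tfrac{9}{4}\|f\|_{C^2(2I)}f$, so $\sqrt{f}$ is Lipschitz with constant $L\leq\tfrac{3}{4}\|f\|_{C^2(2I)}^{1/2}$, and in particular $\sqrt{f(t)}\leq L(t-x_*)$ on $[x_*,x_0]$. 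Applying Lemma \ref{lem-G} to the rescaled function $G(y):=f(t+(t-x_*)y)$ on $y\in[-1,0]$ at each $t\in(x_*,x_0]$ gives
$$
(t-x_*)|f'(t)|+(t-x_*)^2|f''(t)|+(t-x_*)^3|f'''(t)|\leq A\,f(t)+B(t-x_*)^4\|f\|_{C^4(2I)}.
$$

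The cancellation in $Q$ is the crux. A direct computation gives $Q'=2ff'''$ and $Q''=2f'f'''+2ff^{(4)}$; since $f(x_*)=f'(x_*)=0$, both $Q(x_*)=Q'(x_*)=0$, so by the integral form of Taylor's theorem
$$
Q(x_0)=\int_{x_*}^{x_0}(x_0-s)\bigl[2f'(s)f'''(s)+2f(s)f^{(4)}(s)\bigr]\,ds.
$$
I would then insert the bounds from Lemma \ref{lem-grad} and the scaled Lemma \ref{lem-G}, combining the two regimes $f(t)\leq L^2(t-x_*)^2$ (Lipschitz, tight near $x_*$) and $f(t)\leq f(x_0)$ (monotonicity, tight near $x_0$), to bound the integrand and conclude the desired estimate for $|Q(x_0)|$.

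The hard part is extracting this cancellation: naive term-by-term bounds on $|f''|/\sqrt{f}$ and $(f')^2/f^{3/2}$ are individually unbounded (for $f=x^2$ both blow up like $1/|x|$ while $u_{xxx}=0$). One must carry the combination $Q$ through the integration and balance the Lipschitz and monotonicity bounds at the appropriate scale; the vanishing of $Q$ and $Q'$ at $x_*$ is what provides the extra factor of $f(x_0)^{3/2}$ in the end.
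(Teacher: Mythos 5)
Your reduction of the problem to the pointwise inequality $|Q(x_0)|\leq C\|f\|_{C^4(2I)}^{1/2}f(x_0)^{3/2}$ with $Q=2ff''-(f')^2$, and your treatment of the $C^5$ regularity, agree with the paper. The identity $Q'=2ff'''$, hence $Q(x_*)=Q'(x_*)=0$ and
$Q(x_0)=\int_{x_*}^{x_0}(x_0-s)\bigl[2f'(s)f'''(s)+2f(s)f^{(4)}(s)\bigr]\,ds$,
is also correct. The gap is in the final step: you propose to bound the \emph{integrand} pointwise (via Lemma \ref{lem-grad} and the rescaled Lemma \ref{lem-G}) and then integrate, i.e.\ to control $\int_{x_*}^{x_0}(x_0-s)|Q''(s)|\,ds$. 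That quantity is \emph{not} bounded by $Cf(x_0)^{3/2}$ under the hypotheses. Take $x_*=0$, fix $x_0$, and let $f(\sigma)=\epsilon\,\psi(\sigma/\ell)$ with $\ell=\epsilon^{1/4}$, where $\psi$ is a smooth non-decreasing transition from $0$ to $1$; extend $f\equiv\epsilon$ on $[\ell,x_0]$. Then $\|f\|_{C^4}=O(1)$, all hypotheses hold, and $Q(x_0)=0$, but on the transition zone $|Q''|=\tfrac{\epsilon^2}{\ell^4}\,|2\psi'\psi'''+2\psi\psi^{(4)}|$ has absolute integral of order $\epsilon^2\ell^{-3}=\epsilon^{5/4}$, so
$\int_{0}^{x_0}(x_0-s)\,|Q''(s)|\,ds\;\sim\; x_0\,\epsilon^{5/4}\;\gg\;\epsilon^{3/2}=f(x_0)^{3/2}$
as $\epsilon\to 0$. (Note $\int_0^1(2\psi'\psi'''+2\psi\psi^{(4)})\,dt=2\psi\psi'''\big|_0^1=0$, so the integrand genuinely changes sign: the true smallness of $Q(x_0)$ comes from cancellation \emph{inside} the integral, which any pointwise bound on the integrand discards.) So the vanishing of $Q$ and $Q'$ at $x_*$ is not the cancellation that drives the estimate, and the two regimes you mention ($f\leq L^2(t-x_*)^2$ versus $f\leq f(x_0)$) cannot rescue a bound that already fails at the level of $|Q''|$.

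For comparison, the paper avoids the integral representation entirely and splits according to whether $f(x)\geq x^4$ or $f(x)\leq x^4$ at the point of interest. When $f(x)\leq x^4$ it applies Lemma \ref{lem-G} at the scale $\epsilon=f(x)^{1/4}$ (rather than your scale $t-x_*$), which yields the purely local, term-by-term bounds $|f'|\leq Cf^{3/4}$, $|f''|\leq Cf^{1/2}$ at $x$ itself, and then no cancellation in $Q$ is needed; this disposes of the example above. When $f(x)\geq x^4$, the cancellation is extracted not from $Q$ but from the Taylor jet of $f$ at the origin: one shows $g=f/x^2$ and $h=f'/x$ are $C^2$ with norms controlled by $\|f\|_{C^4}$ (Lemma \ref{lem-FGC2}), writes $2ff''-(f')^2=2fxh'-x^4(g')^2-2xfg'$, and applies Lemma \ref{lem-grad} to $g$ and $h$. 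If you want to keep your framework, you would need to replace the pointwise estimation of $Q''$ by an argument that retains the sign cancellation over $[x_*,x_0]$, or adopt the paper's two-regime split.
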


\begin{proof}
At $x\in I- \{f=0\}$, $f(x)>0$. So $u_x= \sqrt{f}>0$ in a neighborhood of $x$, or $u_x= -\sqrt{f}<0$ in a neighborhood of $x$. In both cases, $f \in C^4(I)$ implies $u$ is $C^5$ near $x$.

First we assume $ ||f||_{C^4(2I)}=1$.
Our goal is to prove, for $x \in I-\{f=0\}$,
\begin{align}\label{eq-MainEq1D}
|u_{xxx}|= |(\sqrt{f})^{\prime\prime}|=|\frac{2ff^{\prime\prime}- (f^\prime)^2}{4f^\frac{3}{2}}|\leq  C,
\end{align} 
for some universal constant $C$.

Denote $g=\frac{f}{x^2}, h=\frac{f^\prime}{x}$. By Lemma \eqref{lem-FGC2}, $g, h \in C^2(2I)$ and the $C^2$ norms of $g, h$ are bounded by $C||f||_{C^4}=C$ for some universal constant $C$. Notice $g, h$ are nonnegative.
If we apply Lemma \ref{lem-grad} to $g$, then we derive, for $x \in I$,
\begin{align}
|xg^\prime|=
|\frac{x f^\prime-2f}{x^2}| \leq C f^\frac{1}{2}.\label{eq-GP1}
\end{align}
And applying Lemma \ref{lem-grad} to set $h$,  we derive, for $x \in I$,
\begin{align}\label{eq-HP1}
|h^\prime|=|\frac{xf^{\prime\prime}-f^\prime}{x^2}|\leq C\left(\frac{f^\prime}
{x}\right)^\frac{1}{2}=C(xg^\prime+2\frac{f}{x^2})^\frac{1}{2}\leq C( f^\frac{1}{4}+|x|^{-1} f^\frac{1}{2}). 
\end{align}

Case 1: Consider points $x\in I-\{f=0\}$, such that $f(x)\geq x^4$. We see that $|x| f^\frac{1}{2}\leq f^\frac{3}{4}, |x| f^\frac{1}{4}\leq f^\frac{1}{2}$. Then by \eqref{eq-GP1}, \eqref{eq-HP1},
\begin{align}
|x^2 g^\prime|=|f^\prime-\frac{2f}{x}|&\leq Cf^\frac{3}{4} \label{eq-GP},\\
|xh^\prime|=|f^{\prime\prime}-\frac{f^\prime}{x}|&\leq Cf^\frac{1}{2}
\label{eq-HP},
\end{align}
and by \eqref{eq-GP1}, \eqref{eq-GP}, \eqref{eq-HP},
\begin{align*}
|f  x h^\prime| &=f |xh^\prime| \leq Cf^\frac{3}{2},\\ 
x^4(g^\prime)^2 &= (x^2 g^\prime)^2 \leq Cf^\frac{3}{2},\\
|xf g^\prime| &= f|x g^\prime| \leq C f^\frac{3}{2}.
\end{align*}
which further implies $2ff^{\prime\prime}-(f^\prime)^2 =2fxh^\prime-x^4 (g^\prime)^2-2xfg^\prime$ is bounded by $Cf^\frac{3}{2}$. So \eqref{eq-MainEq1D} holds.

Case 2: Consider points $x\in I-\{f=0\}$, such that $f(x)\leq x^4$.
We are to prove at such  $x$,
\begin{align}\label{eq-DfOrder}
|f^\prime|\leq Cf^\frac{3}{4}, |f^{\prime\prime}|\leq Cf^\frac{1}{2}, |f^{(3)}|\leq C f^\frac{1}{4}, 
\end{align}
for some universal constant $C$,
which implies \eqref{eq-MainEq1D}.

If $0 <\epsilon \leq x$, Lemma \ref{lem-G} can be applied to the function $G(y)=f(x+\epsilon y)$ for $-1\leq y\leq 0$. Since 
\begin{align*}
G^\prime(0)&= \epsilon f^\prime (x),\quad G^{\prime\prime}(0) = \epsilon^2 f^{\prime\prime} (x), \\
 G^{\prime\prime\prime}(0) & = \epsilon^3 f^{\prime\prime\prime} (x), \quad
 \max_{y\in [-1, 0]} |G^{(4)}(y) |\leq  \epsilon^4 \max_{x\in I} |f^{(4)} (x)|,
\end{align*}
we derive
\begin{align}\label{eq-interpolation}
\epsilon |f^\prime(x)| +\epsilon^2 |f^{\prime\prime} (x) | + \epsilon^3 | f^{\prime\prime\prime} (x)| &\leq Af(x) +\epsilon^4 B \cdot \max_{x\in I} |f^{(4)}(x)|.
\end{align}
By setting $\epsilon= f(x)^\frac{1}{4}\leq x$,  \eqref{eq-interpolation} implies \eqref{eq-DfOrder}. If for some $x$, $x<0$ and  $f(x)\leq x^4$, we set $G(y)= f(x-\epsilon y)$ for $-1\leq y\leq 0$, then we derive \eqref{eq-DfOrder} in a similar way.

In sum, \eqref{eq-MainEq1D} is verified, and we derive \eqref{eq-Main1D0} by as a scaling.
\end{proof}

\begin{remark}
Under the assumption of Theorem \ref{thm-MainEstm} and $||f||_{C^N(2I)}\leq C$ for some fixed large $N$, in general $u$ does not have a uniform $C^{3,\alpha}$  estimate, for any $0<\alpha<1$, on a connected component of $I-\{f=0\}$. A counterexample is a family of functions 
\begin{align}\label{eq-CountEx}
u_s(x)=(x^2+s^2)^\frac{3}{2},
\end{align}
and $f=u_x^2= 9s^2 x^2+ 9x^4$. 
\end{remark}

Checking the behavior of $u_{xx}$ near the origin, we have,
\begin{theorem}\label{thm-C21}
Assume the same assumption as Theorem \ref{thm-MainEstm}, and in addition, $u_{xx}\geq 0$ for any $x\in 2I- \{f=0\}$.
 Then 
$
u\in C^{2,1}( I),
$ with \eqref{eq-Main1D0} holds.
\end{theorem}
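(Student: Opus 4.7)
The plan is to upgrade the pointwise bound $|u_{xxx}|\le C\|f\|_{C^4(2I)}^{1/2}$ from Theorem \ref{thm-MainEstm} to a global Lipschitz bound on $u_{xx}$ over all of $I$. Denote $Z:=\{x\in 2I:f(x)=0\}$; on each connected component of $I-Z$ the pointwise bound already makes $u_{xx}$ Lipschitz with that constant, so what remains is to extend $u_{xx}$ continuously across $Z$ with the same Lipschitz constant. Here the convexity hypothesis $u_{xx}\ge 0$ on $2I-Z$ plays two roles: it forces the shape of $Z$ to be tame, and it pins down the one-sided limits of $u_{xx}$ at its boundary.

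First I would show that $Z\cap 2I$ is an interval $[c_1,c_2]$. Suppose $(a,b)$ were a component of $2I-Z$ with $-2<a$ and $b<2$; then $u_x(a)=u_x(b)=0$ by continuity, while on $(a,b)$ the function $u_x=\pm\sqrt f$ has a fixed sign. Since $u_{xx}\ge 0$ makes $u_x$ non-decreasing on $(a,b)$, this is incompatible with starting and ending at $0$ while being strictly signed in between. Hence every component of $2I-Z$ must touch $\partial(2I)$, forcing $Z$ to be an interval.

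Next, Lipschitz continuity of $u_{xx}$ on each component of $I-Z$ supplies one-sided limits $L^-$ at $c_1^-$ and $L^+$ at $c_2^+$. The mean value theorem combined with $u_x(c_1)=0$ gives $u_x(x)=L^-(x-c_1)+o(x-c_1)$ as $x\to c_1^-$, and squaring yields
\begin{align*}
f(x)=(L^-)^2(x-c_1)^2+o\bigl((x-c_1)^2\bigr)\qquad\text{as } x\to c_1^-.
\end{align*}
If $c_1=c_2$, the $C^4$ Taylor expansion of $f$ at $c_1$ (using $f(c_1)=f'(c_1)=0$) forces both $L^-$ and $L^+$ to equal $\sqrt{f''(c_1)/2}$. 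If $c_1<c_2$, then $f\equiv 0$ on $[c_1,c_2]$ forces $f^{(j)}(c_1)=0$ for $0\le j\le 4$ by right-sided continuity of $f^{(j)}$; Taylor's theorem then gives $f(x)=o((x-c_1)^4)$, hence $L^-=0$, and symmetrically $L^+=0$ at $c_2$, with $u_{xx}\equiv 0$ on the interior of $[c_1,c_2]$. In every case $u_{xx}$ extends continuously to $I$, and the global Lipschitz bound follows by the triangle inequality.

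The main obstacle I anticipate is the degenerate case $c_1<c_2$, where the one-sided limit of $u_{xx}$ must be shown to vanish at the boundary of $Z$; this is precisely where the $C^4$ regularity of $f$ enters, through the propagation of vanishing derivatives of $f$ across that boundary.
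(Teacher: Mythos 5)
Your proposal is correct and follows essentially the same route as the paper's proof: the zero set of $f$ is a single interval (the paper gets this from $f'(x)x\ge 0$, you from convexity), the one-sided limits of $u_{xx}$ at its endpoints are identified from the Taylor expansion of $f$ there ($\sqrt{f''/2}$ in the nondegenerate case, $0$ in the degenerate one), and the Lipschitz bound of Theorem \ref{thm-MainEstm} is then glued across. The only substantive difference is in the degenerate case: the paper re-uses the decay estimate \eqref{eq-DfOrder} to show $u_{xx}=f_x/(2\sqrt f)\to 0$ directly, whereas you first extract the existence of the limits $L^{\pm}$ from the interior Lipschitz bound and then force $L^{\pm}=0$ by comparing $f=(L^{\pm})^2(x-c)^2+o\bigl((x-c)^2\bigr)$ with $f=o\bigl((x-c)^2\bigr)$ --- a slightly cleaner identification that bypasses \eqref{eq-DfOrder} at this step.
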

\begin{proof}
We show
$u \in C^2(I)$, then the theorem follows from Theorem \ref{thm-MainEstm}.

First assume $f=0$ only at $x=0$.
Taking the Taylor expansion of $f(x)$ at $0$, if $f(x)= Mx^2 + R(x)$, for some   $M>0$, and $R(x)\in O(x^3)$, then for $x\neq 0$,
\begin{align*}
u_{xx}= sign(x) \cdot \frac{f_x}{2\sqrt{f}}= sign(x) \cdot \frac{2Mx+R_x}{2\sqrt{ Mx^2 + R}},
\end{align*}
which approaches $\sqrt{M}$ as $x$ tends to 0. Also we check
\begin{align*}
u_{xx}(0)= \lim_{x \rightarrow 0} \frac{u_x(x)}{x}= \lim_{x \rightarrow 0} u_{xx}(x)=\sqrt{M}
\end{align*}
by L'Hospital's Rule.

If $M=0$, then $f(x)=O(x^4)$. After a scaling, we assume $f(x)\leq x^4$. Then for $x$ near the origin,
\begin{align}\label{eq-Uxx0}
u_{xx}=sign(x)\cdot \frac{f_x}{\sqrt{f}} \leq \frac{C f^\frac{3}{4}}{\sqrt{f}}=C\sqrt{f},
\end{align}
by  \eqref{eq-DfOrder}.
So $u_{xx}(0)$ exists and equals 0.

Secondly, if $f=0$ at some $x_0\neq 0$, without loss of generality, assume $x_0>0$. Since $f^\prime(x) x\geq 0$, $f$ is non-decreasing as $x>0$. Then $f\equiv 0$, for $0\leq x\leq  x_0$. Denote $x_1 = \max\{x \in 2I: f(x)=0\}$. We only have to consider $u_{xx}$ at $x_1$ if $x_1 \in I$. We can use a new coordinate that translates $x_1$ to the origin, and apply an argument like \eqref{eq-Uxx0} to show $u_{xx}(x_1)=0$.
\end{proof}
Example $u=|x|^3$ shows that in general $u \notin C^3(\frac{1}{2} I)$ under the assumption of Theorem \ref{thm-C21}  even if $f\in C^\infty(I)$.

For the case $f>0$ in $2I$,  we see that $u$ is a $C^5$ function on $2I$.
\begin{cor}\label{cor-MainEstm}
Assume $u$ is a $C^{2}$ function such that $f=u_x^2 \in C^4(2I)$. In addition, $f>0, u_{xx}\geq 0$ in $(-2, 2)$. Then for every $x \in I$,
\begin{align}\label{eq-Main1D01}
| u_{xxx}(x)|\leq C ||f||_{C^4(2I)}^\frac{1}{2},
\end{align}
for some universal constant $C$.
\end{cor}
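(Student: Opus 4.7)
The first step is to observe that the hypotheses combine to make $f$ globally monotone on $2I$. Since $u\in C^1$ and $u_x^2=f>0$ on $2I$, the derivative $u_x$ has constant sign, and after reflecting $x\mapsto-x$ if necessary I may assume $u_x=\sqrt{f}>0$. Then $f'=2u_xu_{xx}\geq 0$, so $f$ is non-decreasing on all of $2I$; moreover $u_x=\sqrt{f}\in C^4(2I)$ automatically, so $u$ is $C^5$ and $u_{xxx}$ exists pointwise. The task is therefore purely quantitative.

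The plan is to run the interpolation argument from Case 2 of the proof of Theorem \ref{thm-MainEstm} uniformly at every $x_0\in I$, which is now possible because $f$ is monotone on all of $2I$ rather than only on one side of the origin. By scaling I may assume $\|f\|_{C^4(2I)}=1$, so in particular $\|f\|_\infty\leq 1$. Fix $x_0\in I$ and set $\epsilon=f(x_0)^{1/4}\in(0,1]$; since $x_0\geq-1$ and $\epsilon\leq 1$, the shifted interval $[x_0-\epsilon,x_0]$ lies in $2I$. Define $G(y)=f(x_0+\epsilon y)$ for $y\in[-1,0]$; the global monotonicity makes $G$ non-decreasing and non-negative, so Lemma \ref{lem-G} applies, and after using $G^{(j)}(0)=\epsilon^j f^{(j)}(x_0)$ and $\max|G^{(4)}|\leq\epsilon^4\|f^{(4)}\|_\infty$ I obtain
\begin{equation*}
\epsilon|f'(x_0)|+\epsilon^2|f''(x_0)|+\epsilon^3|f'''(x_0)|\leq Af(x_0)+B\epsilon^4\|f^{(4)}\|_\infty\leq(A+B)f(x_0).
\end{equation*}
With $\epsilon=f(x_0)^{1/4}$ this recovers $|f'(x_0)|\leq Cf(x_0)^{3/4}$ and $|f''(x_0)|\leq Cf(x_0)^{1/2}$ for a universal constant $C$, exactly the bounds \eqref{eq-DfOrder}.

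The estimate on $u_{xxx}$ is then the algebraic consequence used at the end of the proof of Theorem \ref{thm-MainEstm}: because $f(x_0)>0$,
\begin{equation*}
u_{xxx}(x_0)=\bigl(\sqrt{f}\bigr)''(x_0)=\frac{2f(x_0)f''(x_0)-(f'(x_0))^2}{4f(x_0)^{3/2}},
\end{equation*}
whose numerator is bounded by $(2C+C^2)f(x_0)^{3/2}$, giving $|u_{xxx}(x_0)|\leq(2C+C^2)/4$. Undoing the scaling delivers \eqref{eq-Main1D01}. The only delicate point is verifying that the shifted interval $[x_0-\epsilon,x_0]$ fits inside $2I$ for every $x_0\in I$; this is what makes the normalization $\|f\|_{C^4}=1$ (hence $\epsilon\leq 1$) essential. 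Once this is secured no case split between $f\leq x^4$ and $f\geq x^4$ is needed, since the global monotonicity of $f$ eliminates the constraint $\epsilon\leq|x|$ that forced the dichotomy in Theorem \ref{thm-MainEstm}.
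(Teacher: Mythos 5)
Your proposal is correct and follows essentially the same route as the paper: both arguments use $f>0$ and $u_{xx}\geq 0$ to get global monotonicity of $f$ on $2I$, then run the interpolation inequality \eqref{eq-interpolation} from Lemma \ref{lem-G} with $\epsilon\sim f(x_0)^{1/4}\leq 1$ (so the shifted interval stays in $2I$ and no case split is needed), and conclude via the formula for $(\sqrt{f})''$ and scaling. The only cosmetic difference is that the paper shifts the origin to $-2$ and works on $(0,4)$ with $\epsilon=(\tfrac{1}{2}f(x))^{1/4}$, while you keep the original coordinates.
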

\begin{proof}
$u_{xx}\geq 0$ implies $f$ is non-increasing or non-decreasing on $(-2, 2)$, depending on the sign of $u_x(0)$.
Without loss of generality, we shift the origin to $-2$, and assume $f$ is non-decreasing on $(0,4)$.

Assume $||f||_{C^4(2I)}= 1$ first.
Our goal is to prove \eqref{eq-DfOrder} in $[1, 3]$. In fact, for any $x \in [1, 3]$, we can derive \eqref{eq-interpolation} for $\epsilon\in (0, x)$. We select $\epsilon=( \frac{1}{2} f(x))^\frac{1}{4} <  f(x)^\frac{1}{4} \leq 1 \leq  x$. The rest follows as Theorem \ref{thm-MainEstm}.
\end{proof}

For the applications in Section \ref{sec-TwoDim} and \ref{sec-NonNeg}, we need a scaling version of Theorem \ref{thm-MainEstm}, \ref{thm-C21}. Assume $u$ is a $C^{1,1}$ function such that $f=u_x^2 \in C^4(2sI)$ for some $0<s<1$. In addition, $u_x(0)=0$ and $f^\prime(x)x$ is nonnegative. In addition $u_{xx} \geq 0$ in $2sI-\{f=0\}$. We define
\begin{align*}
\bar{u}(x)=u(s^{-1} x),
\end{align*}
then $\bar{u}$ satisfies the assumption of Theorem \ref{thm-MainEstm}. We derive $\bar{u}\in C^2(I)$ and 
\begin{align*}
|\bar{u}_{xxx}| \leq C ||\bar{u}_x^2||^\frac{1}{2}_{C^4(2 I)},
\end{align*}
in $I-\{\bar{u}_x=0\}$,
implying $u \in C^2(sI)$, and 
\begin{align}\label{eq-C21S}
|u_{xxx}|\leq Cs^{-3} ||u_x^2||^\frac{1}{2}_{C^4(2s I)},
\end{align}
in $sI-\{u_x=0\}$. \eqref{eq-C21S} is also right under the assumption of Corollary \ref{cor-MainEstm} if we shrink the interval by multiplying the factor $s$.

\bigskip
\section{Two dimensional case with one singular point}\label{sec-TwoDim}
In this section, we prove Theorem \ref{thm-Main}. Maybe making  $r$ a bit smaller, we assume that the $C^{1,1}$ function $u$ in \eqref{eq-GImb} is defined in $rI \times rI$. In addition, we assume that $f=u_x^2>0$, $u_{x}$ exists and be positive, except at the origin. Here  $u_{xx}$ exists in $B(O,r) -\{O\}$, since the Gauss curvature $k>0$ except at the origin. Then by the classic theory of Monge-Amp\`ere equations, $g$ is $C^4$ implies that $u$ is $C^{3, \alpha}$, except at the origin, for any $\alpha \in (0,1)$. See Section 10.3 of \cite{Schulz}.

For $(x, y)\neq 0$, $\{u_x=0\}$ is locally a curve, since at any point except the origin, $u_{xx}(x,y)>0$, then we can solve out $x= A(y)$ as a function of $y$ from the equation $u_x(x,y)=0$. Furthermore, $\frac{d}{dy} A(y)=-\frac{u_{xy}(A(y), y)}{u_{xx}(A(y), y)}$, which is uniformly bounded when $y\in (\delta, r) \cup (-r, -\delta)$ for any $\delta>0$. In addition, for each $y$, we can only have at most one $x$, such that $u_x (x, y)=0$, since $u_x$ is strictly increasing. Though the gradient of $A(y)$ may blow up when $y$ approaches $0$, we  show that $\{u_x=0\}$ is a continuous curve.

\begin{lem} Assume that $u \in C^{1,1}(rI\times rI)$, $u_x(0,0)=0$ and $u_{x}$ is an increasing function in $x$ for any fixed $y$. Then
$\{u_x=0\}$ is a continuous curve near the origin.
\end{lem}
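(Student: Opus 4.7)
The plan is to parametrize $\{u_x=0\}$ near the origin as the graph $x=A(y)$ of a function of $y$, and to obtain continuity of $A$ (in particular at $y=0$, where as noted $A'$ may blow up) via the intermediate value theorem combined with uniqueness of the zero, rather than by any quantitative gradient bound.

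First I would produce $A(y)$ on a small interval $(-\delta,\delta)$. Because $u_x(\cdot,0)$ is strictly increasing on $[-r,r]$ with $u_x(0,0)=0$, we have $u_x(-r,0)<0<u_x(r,0)$. Since $u\in C^{1,1}$, $u_x$ is continuous, so there exists $\delta>0$ with $u_x(-r,y)<0<u_x(r,y)$ whenever $|y|<\delta$. The intermediate value theorem applied to $x\mapsto u_x(x,y)$ then yields at least one zero in $(-r,r)$, and strict monotonicity in $x$ makes that zero unique; call it $A(y)$. In particular $A(0)=0$.

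Next I would verify continuity of $A$ on $(-\delta,\delta)$. Take any sequence $y_n\to y_0$. The values $A(y_n)$ lie in $[-r,r]$, so every subsequence has a further subsequence $A(y_{n_k})\to L$ for some $L\in[-r,r]$. Continuity of $u_x$ gives $u_x(L,y_0)=\lim_k u_x(A(y_{n_k}),y_{n_k})=0$, and uniqueness of the zero of $u_x(\cdot,y_0)$ forces $L=A(y_0)$. Since every subsequence admits a sub-subsequence tending to the same limit $A(y_0)$, the full sequence converges, so $A$ is continuous, and the set $\{u_x=0\}$ is the graph of the continuous function $A$ on a small interval containing $0$.

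I do not expect any genuine obstacle here: the argument uses only continuity of $u_x$ together with strict monotonicity in the $x$-variable, and the possible blow-up of $A'$ at $y=0$ mentioned just before the lemma is irrelevant to mere $C^0$ behaviour of $A$. The only mild subtlety is separating the two roles of the monotonicity hypothesis — existence of a zero (via IVT with endpoint signs propagated from $y=0$ by continuity) and uniqueness of that zero — so as to run the subsequence-limit characterization of continuity cleanly.
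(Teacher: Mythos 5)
Your proof is correct and rests on the same ingredients as the paper's: continuity of $u_x$ (from $u\in C^{1,1}$) to propagate the endpoint signs $u_x(\pm\epsilon,y)\gtrless 0$ from $y=0$ to nearby $y$, the intermediate value theorem for existence of the zero, and strict monotonicity of $u_x$ in $x$ for its uniqueness. The only cosmetic difference is that you package the continuity of $A$ as a subsequence--compactness argument valid at every $y_0$, whereas the paper runs the $\epsilon$--$\delta$ argument directly at $y=0$; both yield the same conclusion.
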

\begin{proof}
We only have to show $A(y)$ is continuous at $y=0$, i.e.
for any $\epsilon \in (0,1)$, we can find an $\delta>0$, such that $-\delta<y<\delta$ implies $-\epsilon < A(y) < \epsilon$. 

On the segment $\{(x,y): -r\leq x \leq r, y=0\}$, $u_x$ is increasing. Assume $u_x(\epsilon, 0)>\eta$, and $u_x(-\epsilon, 0)<-\eta$, for some $\eta>0$. Then there is an $\delta>0$, such that when $|y|<\delta$, 
\begin{align*}
u_x(\epsilon, y)>0, \quad u_x(-\epsilon, y)<0.
\end{align*}
The choice of $\delta$ depends on $\eta, ||u||_{C^{1,1}}$. Hence, for any fixed $y\in(-\delta, \delta)$, the zero of $u_x$  must be unique and the value of $x$ lies in $(-\epsilon, \epsilon)$, by the assumpition that $u_{x}$ is an increasing function in $x$ for any fixed $y$. So we derive $-\epsilon< A(y)<\epsilon$ as $|y|<\delta$.
\end{proof}

Now we prove Theorem \ref{thm-Main}.
\begin{proof}
By the assumption, the metric
\begin{align*}
g=d x^2+ dy^2 + du^2 =(1+u_x^2) d x^2 + u_x u_y dx dy + u_x u_y dy dx+(1 + u_y^2) dy^2
\end{align*}
is $C^4$, which implies that $u_x^2, u_y^2, u_x u_y \in C^4$. Hence $u_z^2 \in C^4$ for any $z= lx+my$, where $l, m$ are fixed numbers in $\mathbb{R}$.
At points except the origin, $k>0$. Then the classic theory of Monge-Amp\`ere equation shows that $u\in C^{2,\alpha}$ at any point $(x,y)$ away from the origin. We get $u_{zz}>0$ except at the origin.

Now fix a small $\epsilon<<r$. Assume $u_x(-\epsilon, 0) <-2 \eta_1, u_x(\epsilon, 0)>2 \eta_1$ for  some $\eta_1>0$. Then
$u_x(-\epsilon, y) <-\eta_1, u_x(\epsilon, y)>\eta_1$, if $|y|<\delta_1 = \frac{\eta_1}{||u||_{C^{1,1}(rI \times rI)}}$. We set 
\begin{align*}
s= \max_{|y|<\delta_1} \{ \epsilon-A(y), \epsilon+A(y)\},
\end{align*}
which has bound $\epsilon\leq s \leq 2\epsilon$.
Then the interval $[A(y)-2s,  A(y)+2s]\in I$ since $\epsilon$ is small. And for any $y\in (-\delta_1, \delta_1)$,
\begin{align*}
f(x, y)= u_x^2(x, y)> \eta_1^2,
\end{align*}
if $x= A(y)-2s$ or $A(y)+2s$. So by \eqref{eq-C21S}, for any $y\in (-\delta_1,\delta_1)$, $u$ has $C^{2,1}$ estimates for $x \in [A(y)-s,  A(y)+s]$, which depends only on $s, ||g||_{C^4(rI\times rI)}$. For points $(x, y) \in (rI  - [A(y)-s,  A(y)+s]) \times \delta_1 I$, 
\begin{align*}
f(x, y)= u_x^2(x, y)> \eta_1^2,
\end{align*}
and hence by \eqref{eq-MainEq1D},
\begin{align*}
|u_{xxx}| = |\frac{2ff^{\prime\prime}- (f^\prime)^2}{4f^\frac{3}{2}}|\leq  \eta_1^{-3} ||f||_{C^2(rI \times rI)}^2.
\end{align*}

Now $u \in C^3(rI \times rI - (0,0))$, since the metric $g \in C^4$. In $rI \times \delta_1 I$, $u$ has $C^{2,1}$ estimates in $x$, which depends only on $\eta_1, \epsilon, ||g||_{C^4}$, that is,
\begin{align*}
||u_{xxx}(x, y)||\leq C(\eta_1, \epsilon, ||g||_{C^4}),
\end{align*}
for $(x, y) \in rI \times \delta_1 I- (0,0) $.

Similarly, switching $x$ and $y$, we can find $\eta_2$ such that
\begin{align*}
||u_{yyy}(x, y)||\leq C(\epsilon, \eta_2, ||g||_{C^4}),
\end{align*}
for $(x, y) \in \delta_2 I \times r I- (0,0) $, where $\delta_2=\frac{\eta_2}{{||u||_{C^{1,1}(rI \times rI)}}}$.

Apply the same argument to coordinates $z=\frac{x+y}{2}$ and $w=\frac{x-y}{2}$, we have
\begin{align*}
u_{zzz}&= u_{xxx}+ 3u_{xxy}+ 3u_{xyy}+ u_{yyy},\\
u_{www}&= u_{xxx}- 3u_{xxy}+ 3u_{xyy}- u_{yyy},
\end{align*}
are uniformly bounded in two rectangular neighborhoods of the origin minus the origin, respectively, where the bounds only depend on $\epsilon, \eta_1, \eta_2, \eta_3, \eta_4, ||g||_{C^4}$.

So we derive the bounds of $u_{xxy}, u_{xyy}$ in a neighborhood of the origin, since
\begin{align}\begin{split}\label{eq-UZZZ}
u_{xxy}&= \frac{u_{zzz}-u_{www}-2u_{yyy}}{6},\\
u_{xyy}&=\frac{u_{zzz}+u_{www}-2u_{xxx}}{6}.\end{split}
\end{align}

We derived that $u$ has uniform $C^3$ estimates except at the origin.
Then, by a basic argument in calculus, we derive that $u\in C^{2,1}$ near the origin.
\end{proof}
\bigskip

\section{Proof of corrollaries}\label{sec-Cor}

\begin{proof}[Proof of Corollary \ref{cor-Mean}]
By Theorem \ref{thm-Main}, $u$ is $C^{2,1}$ in $x, y$. By the assumption, $\Delta u > C_0 > 0$ around the origin but not necessarily at the origin. Without loss of generality, under a rotation of coordinates, we assume for coordinates $\tilde{x}, \tilde{y}$,
\begin{align*}
u_{\tilde{x}\tilde{x}}(0,0)\geq C_0>0,\quad u_{\tilde{y}\tilde{y}}(0,0)=0
\end{align*}
which should hold for some $\tilde{x}, \tilde{y}$, since the Gauss curvature $k = 0$ at the origin.

We can rotate $x, y$ a little bit, such that none of the $x, y$ direction confirms with the
$\tilde{y}$ direction, and so
\begin{align*}
u_{xx} >C_1 >0,u_{yy} >C_1 >0,
\end{align*}
for some $C_1$ depends only on $C_0$ and the angle between the $x,y$ direction and the $\tilde{y}$ direction. This does not change that fact that $u_{xx}u_{yy} - u_{xy}^2 = 0$ at the origin, so we cannot apply the classic  theory for Monge-Amp\`ere equations.

Recall in the proof of Theorem \ref{thm-C21}, for the one dimensional model,  if $u_{xx} > C_1 > 0$, we derive $f = u^2_x = Mx^2 +Rx^3$, where $M > C_1^2$ is independent of $x$, and $R$ is smooth by the assumption that $f = g_{xx} -1$ is smooth. Then
\begin{align*}
u_x =sign(x)\cdot \sqrt{ Mx^2+Rx^3 }=x\sqrt{M+Rx},
\end{align*}
which has $C^k$ bounds for any integer $ k > 0$, which only depends on $M, k, ||f||_{C^{k+3}}$. In sum, $|D_x^k u| \leq  B_k(M,k,||f||_{C^{k+3}})$ for any $k$, and explicitly we have
\begin{align*}
D^2_x u(0) &=\sqrt{M},\\
D^3_x u(0) &= \frac{R(0)}{\sqrt{M}},\\
D^4_x u(0) &= \frac{12R_x(0)M - 3R(0)^2}{4M^\frac{3}{2}},\\
\cdots
\end{align*}

Then we check the two dimensional model, and derive $|D_x^k u| \leq  B_k(M,k,||f||_{C^{k+3}})$
around the origin. The estimates also hold for the two pairs of coordinate systems $z=\frac{x+y}{2}, w=\frac{x-y}{2}, z_1=\frac{x+2y}{5}, w_1=\frac{2x-y}{5}$, if none of these four coordinates points to the $\tilde{y}$ or $-\tilde{y}$ direction. We can rotate the $x, y$ coordinates system a little bit if one of them does.
Then
\begin{align*}
D_z^4 u &= u_{xxxx} + 4u_{xxxy} + 6u_{xxyy} + 4u_{xyyy} + u_{yyyy}\\
D_w^4 u &= u_{xxxx} -4u_{xxxy} + 6u_{xxyy}- 4u_{xyyy} + u_{yyyy}\\
D^4_{z_1} u &=u_{xxxx} +8u_{xxxy} +24u_{xxyy} +32u_{xyyy} +16u_{yyyy}
\end{align*}
are bounded, implying $u \in C^4$ near the origin.

Inductively, we can prove $u$ is $C^k$ near the origin by introducing more pairs of coordinate systems, and the corollary is verified.
\end{proof}

In Theorem \ref{thm-Main},
The forms of $\alpha, \beta$ that are allowed can be slightly generalized.
\begin{cor}\label{cor-Main}
Assume that $(B(O, r), g)$ satisfies the same assumption as in Theorem \ref{thm-Main}. 
If  a $C^{1,1}$ isometric embedding $X:(B(O, r), g) \rightarrow (\mathbb{R}^3, g_{can})$ which is of form \eqref{eq-GeneEmb} under local coordinates near $O$, satisfies the normalization \eqref{eq-GraphU}, and if $\alpha, \beta$ are $C^5$ in $x, y$, then $X\in C^{2,1}(B_g(O, r))$.
\end{cor}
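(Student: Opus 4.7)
The plan is to reduce Corollary \ref{cor-Main} to Theorem \ref{thm-Main} by changing planar coordinates so that $X$ takes the pure graph form. First I would verify that $(x,y)\mapsto(\alpha(x,y),\beta(x,y))$ is a $C^5$ local diffeomorphism near $O$. At $O$ the push-forwards $dX(\partial_x)=(\alpha_x,\beta_x,u_x)$ and $dX(\partial_y)=(\alpha_y,\beta_y,u_y)$ must be linearly independent, since $g$ is a Riemannian (hence positive definite) metric on $B(O,r)$; but the normalization \eqref{eq-GraphU} forces their third components to vanish, so the planar $2\times 2$ block satisfies
\[
\det\!\begin{pmatrix} \alpha_x(0,0) & \alpha_y(0,0) \\ \beta_x(0,0) & \beta_y(0,0) \end{pmatrix}\neq 0.
\]
The inverse function theorem then gives a $C^5$ inverse $(x,y)=(\phi(\tilde x,\tilde y),\psi(\tilde x,\tilde y))$ on some ball $B(\tilde O,r')$.

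Second, I would set $(\tilde x,\tilde y)=(\alpha(x,y),\beta(x,y))$ and define $\tilde u(\tilde x,\tilde y):=u(\phi(\tilde x,\tilde y),\psi(\tilde x,\tilde y))$. Then $X$ is expressed in the pure graph form $\tilde X(\tilde x,\tilde y)=(\tilde x,\tilde y,\tilde u(\tilde x,\tilde y))$ of \eqref{eq-GImb}. Since $g$ is $C^4$ in $(x,y)$ and the coordinate change is $C^5$, the pulled-back metric $\tilde g$ is $C^4$ in $(\tilde x,\tilde y)$. The Gauss curvature is intrinsic, hence $\tilde k>0$ on $B(\tilde O,r')\setminus\{\tilde O\}$ with $\tilde k(\tilde O)=0$. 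The chain rule applied to \eqref{eq-GraphU} gives $\tilde u(0,0)=0$ and $\tilde u_{\tilde x}(0,0)=\tilde u_{\tilde y}(0,0)=0$, and $\tilde u$ inherits the $C^{1,1}$ regularity of $u$ because the diffeomorphism is smoother than $C^{1,1}$.

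Third, Theorem \ref{thm-Main} applied to $\tilde X$ yields $\tilde u\in C^{2,1}$ near $\tilde O$, so $\tilde X\in C^{2,1}$. Composing with the $C^5$ map $(\alpha,\beta)$ preserves $C^{2,1}$ regularity of the graphing function, since $u=\tilde u\circ(\alpha,\beta)$ and the composition of a $C^{2,1}$ function with a $C^5$ map is again $C^{2,1}$. Hence $u\in C^{2,1}$ near $O$ in the original coordinates, giving $X\in C^{2,1}(B_g(O,r))$ after possibly shrinking $r$.

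The main obstacle, such as it is, is verifying the nondegeneracy of the planar Jacobian of $(\alpha,\beta)$ at $O$; once that is in place the rest is bookkeeping that the regularity and normalization transfer correctly. The $C^5$ hypothesis on $\alpha,\beta$ is tight exactly because a $C^5$ change of coordinates is what is required to preserve the $C^4$ regularity of the metric (which Theorem \ref{thm-Main} needs), and to preserve $C^{2,1}$ regularity of the graphing function upon transporting the conclusion back.
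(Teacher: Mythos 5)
Your proposal is correct and follows essentially the same route as the paper: both pass to $(\alpha,\beta)$ as new planar coordinates, check that this is a $C^5$ diffeomorphism near $O$ so that $u_\alpha^2$, $u_\beta^2$, $u_\alpha u_\beta$ remain $C^4$ (equivalently, the metric stays $C^4$ in the new coordinates), and then run the graph-case argument of Theorem \ref{thm-Main} (Section \ref{sec-TwoDim}) before transporting the $C^{2,1}$ conclusion back. The only cosmetic difference is how the nondegeneracy of the Jacobian at $O$ is established — the paper uses normal coordinates and a rotation to make it equal $1$, while you deduce it from positive definiteness of $g$ together with $u_x(0,0)=u_y(0,0)=0$.
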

\begin{proof}

Under the assumption of Corrollary \ref{cor-Main}, in any domain which does not include the origin, we have $ u \in C^{2, \mu}\,\, (0<\mu<1)$, and
\begin{align}\label{eq-UAB}
&u_{\alpha \alpha}>0, \,\, u_{\alpha \alpha}u_{\beta \beta}-u_{\alpha \beta}u_{\alpha \beta}>0,
\end{align}
 if we regard $u$ as a function of $\alpha, \beta$. Notice we does not necessarily have $u_{xx}>0$.

The system of equations
\begin{align*}
&\alpha_x^2+ \beta_x^2+ u_x^2= g_{xx},\\
&\alpha_x \alpha_y+ \beta_x \beta_y+ u_x u_y= g_{xy},\\
&\alpha_y^2+ \beta_y^2+ u_y^2= g_{yy},
\end{align*}
implies
\begin{align*}
& u_\alpha^2= g_{xx} x_\alpha^2+2g_{xy} x_\alpha y_\alpha+g_{yy} y_\alpha^2-1\\
& u_\beta^2= g_{xx} x_\beta^2+2g_{xy} x_\beta y_\beta+g_{yy} y_\beta^2-1\\
& u_\alpha u_\beta= g_{xx} x_\alpha y_\beta+g_{xy}( x_\beta y_\beta+x_\beta y_\alpha)+g_{yy} y_\alpha y_\beta, \\
\end{align*}
where the terms on the right hand side are $C^4$ in $\alpha, \beta$ by the assumption of Corrollary \ref{cor-Main}. Here $x, y$ can be regarded as $C^5$ functions of $\alpha, \beta$, since at the origin, we may choose $x, y$ as normal coordinates, and after a rotation, we assume $\alpha_y(0,0)=0, \beta_x(0,0)=0$. Then at the origin
\begin{align*}
&\alpha_x^2= g_{xx}=1,  \beta_y^2= g_{yy}=1.
\end{align*}
The Jacobian $\frac{\partial (\alpha, \beta)}{\partial (x, y)} =1$ at the origin. By the implicit function theorem, we can solve out $x, y$ as functions of $\alpha, \beta$.

We derive that $u_\alpha^2, u_\beta^2, u_\alpha u_\beta \in C^4$ and \eqref{eq-UAB} holds. We can apply the same method as in Section \ref{sec-TwoDim} to derive $u\in C^{2,1}$ near the origin. Then Corrollary \ref{cor-Main} follows.
\end{proof}

\begin{proof}[Proof of Corollary \ref{cor-MA}.] 
Without loss of generality, we assume \eqref{eq-GraphU} holds and $k=0$ at $O$.
Since $u= \Phi(r)$, we can check $\Phi\in C^{1,1}$, as $u\in C^{1,1}$.  We compute,
\begin{align*}
\det (D^2 u)= \frac{{\Phi_r}
\Phi_{rr}}{r}=k.
\end{align*}
So $k=\Psi(r)$, for some $C^{3}$ function $\Psi$ in $r$, by the assumption that $k$ is $C^3$ in $x, y$ and Lemma \ref{lem-rC3}. For $r<0$, we define $\Phi(r)= \Phi (-r), k(-r)=k(r)$. Then $\Phi\in C^{1,1}(\rho I), \Psi \in C^{2,1}(\rho I)$, since $\Phi(0)=\Phi_r(0)= k(0)= k_r(0)=0$. And when $r<0$, $ {\Phi_r}
\Phi_{rr}=\Psi r$ still holds. Notice $\Psi r $ is $C^3(\rho I)$.
Then
\begin{align*}
\Phi_r^2= \int_0^r \Psi(s) s ds,
\end{align*}
and $\Phi(0)= \Phi_r(0)=0$. Notice $ \int_0^r \Psi(s) s ds$ is in $C^4(\rho I)$ and $\frac{d}{d r} \int_0^r \Psi(s) s ds \cdot r= \Psi(r) r^2\geq 0$. Then we can apply \eqref{eq-C21S}, and derive
\begin{align*}
\Phi \in C^{2,1}(\frac{\rho}{2} I),
\end{align*}
which further implies $u$ is a $C^{2,1}$ function.
\end{proof}
\bigskip

\section{Nonnegative Gauss curvature case}\label{sec-NonNeg}
\bigskip
Set $\delta=\frac{r}{9}$ in this section. 
 Assume we have the Gauss curvature $k\geq 0$ on $(B(P, r), g)$. In addition, we assume $u$ is convex, which implies
\begin{align}\label{eq-XYZW}
u_{xx}, u_{yy}, u_{zz}, u_{ww}\geq 0,
\end{align}
(at where they exist) in $B_g(P, r)$. Here $(z, w)$ is the new coordinate system such that $z=\frac{x+y}{2}, w=\frac{x-y}{2}$. Note that convexity of $u$ is necessary, since we have an example
\begin{align*}
u= \ sign(x) \cdot (x^2+ |x| y^2),
\end{align*}
where $u_x^2$ and the metric of the graph are smooth, with nonnegative Gauss curvature, but $u$ is not $C^2$ with respect to $x$ on the $y$-axis. The main obstruction is that the graph of $u$ is not convex. 

For a point $(x_0, y_0) \in [-\delta, \delta]\times [-\delta, \delta]$, if $u_x(x_0, y_0)=0$ and
$u_x(x, y_0)<0$ for every $x\in (-\delta, x_0)$,
 we call $(x_0, y_0)$ a  left touch point of $u_x$. Notice the left touch point is unique for any $y\in [-\delta, \delta]$ if it exists, according to \eqref{eq-XYZW}. Similarly, we can define right touch points.

\begin{lem}\label{lem-Meas}
Assume $u(x,y)\in C^{1,1}(r I \times r I)$, $u_x^2 \in C^4(r I \times r I)$, $u_{xx}$ exists and $u_{xx}\geq 0$ in $\delta I\times \delta I$. Then in the square $[-\delta, \delta]\times [-\delta, \delta]$, the left and right touch points, form two sets of measure zero in $\mathbb{R}^2$.
\end{lem}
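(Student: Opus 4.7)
The plan is to observe that the hypothesis $u_{xx}\ge 0$ forces the set of left touch points to lie inside the graph of a single-valued function of $y$, after which a routine Fubini argument yields planar Lebesgue measure zero. First, since $u\in C^{1,1}$, the partial $u_x$ is Lipschitz and hence absolutely continuous in $x$ along every horizontal line; the pointwise hypothesis $u_{xx}\ge 0$ will then give
\[
u_x(x_2,y)-u_x(x_1,y)=\int_{x_1}^{x_2}u_{xx}(s,y)\,ds\ge 0
\]
for every $-\delta\le x_1<x_2\le \delta$ and every $y\in[-\delta,\delta]$, so $u_x(\cdot,y)$ is non-decreasing in $x$ for each fixed $y$.

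Consequently, for each $y\in[-\delta,\delta]$ the set $S(y):=\{x\in(-\delta,\delta):u_x(x,y)<0\}$ is either empty or of the form $(-\delta,b(y))$ with $b(y):=\sup S(y)$. If $(x_0,y_0)$ is a left touch point, the strict negativity $u_x(\cdot,y_0)<0$ on $(-\delta,x_0)$ gives $x_0\le b(y_0)$, while monotonicity together with $u_x(x_0,y_0)=0$ forces $u_x\ge 0$ on $[x_0,\delta]$, so $b(y_0)\le x_0$. Thus I will have shown that the left-touch set $T_L$ is contained in the graph $\Gamma_b:=\{(b(y),y):y\in[-\delta,\delta],\,S(y)\neq\emptyset\}$.

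To apply Fubini I need $\Gamma_b$ to be Borel. For every $c\in\mathbb R$ the superlevel set $\{y:b(y)>c\}$ equals the projection onto the $y$-axis of the open set $\{(x,y)\in(c,\delta)\times[-\delta,\delta]:u_x(x,y)<0\}$ (open by continuity of $u_x$), and is therefore itself open. So $b$ is lower semicontinuous, $\Gamma_b$ is Borel in $\mathbb R^2$, and each $y$-slice of $\Gamma_b$ is a single point, hence has $1$-dimensional measure zero. Fubini--Tonelli then yields that $\Gamma_b$ has $2$-dimensional Lebesgue measure zero, and so does $T_L$. The right-touch case is symmetric: replace $<0$ by $>0$ and $\sup$ by $\inf$, obtaining an upper semicontinuous function whose graph again has planar measure zero.

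I do not anticipate any serious analytic obstacle. The only mildly subtle point is deducing monotonicity of $u_x(\cdot,y)$ from a pointwise (rather than almost-everywhere) hypothesis on $u_{xx}$ under the bare regularity $u\in C^{1,1}$, which is handled by absolute continuity in one variable. The real content is conceptual: convexity in the $x$-direction collapses each horizontal touch set to at most one point, and graphs of semicontinuous functions always have planar Lebesgue measure zero.
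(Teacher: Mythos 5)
Your proposal is correct and follows essentially the same route as the paper: both arguments collapse the left touch set onto the graph of a single-valued function of $y$ (your $b(y)$ is the paper's touch function $T_L(y)$ up to boundary conventions), establish lower semicontinuity of that function, and conclude by Fubini that its graph has planar measure zero. The only cosmetic difference is that you verify semicontinuity via superlevel sets being projections of open sets, while the paper runs a direct pointwise argument.
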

\begin{proof} Without loss of generality, we only consider set of the left touch points.
Define the left touch function $T_L(y)$ on $[-\delta, 
\delta]$, where $T_L(y_0)$ equals $x_0$ if we can find an $x_0$ such that $(x_0, y_0)$ is the left touch point on the line $y=y_0$, which is unique if any;  otherwise, $u_x(x, y_0)\geq 0$ for $x\in (-\delta, \delta)$, for which case we set $T_L(y_0)=-\delta$, or $u_x(x, y_0)< 0$ for $x\in (-\delta, \delta)$ for which case we set $T_L(y_0)=\delta$.

We check $T_L$ is a lower semi-continuous function. If $T_L(y)=-\delta$, then it's trivial since $T_L(x)\geq -\delta$.

 If $T_L(y)= x$ for some $x>-\delta$, then $(x, y)$ is a touch point, or $x=\delta$. In this case, for any $\epsilon\in (0, \delta+x)$, $u_x(x-\epsilon, y)<0$, so there is a neighborhood of $(x-\epsilon, y)$ in $\mathbb{R}^2$, such that $u_x<0$ in the neighborhood. So there is a $\eta>0$, for any $y_1\in (y-\eta, y+\eta)$,   $u_x(x-\epsilon, y_1)<0$, implying $T_L(y_1)>x-\epsilon$.

So $T_L$ is measurable as a lower semi-continuous function, and by the Fubini Theorem, its graph has measure zero.
\end{proof}

Lemma \ref{lem-Meas} shows that $u_{xxx}$ exists and is uniformly bounded in $\delta I \times \delta I$ minus the sets of left and right touch points, i.e. $u_{xxx}$ exists and is uniformly bounded, almost everywhere in $rI\times rI$.

We are ready to prove Theorem \ref{thm-Main1}, using mollifiers to help with applying \eqref{eq-UZZZ}.
\begin{proof}[proof of Theorem \ref{thm-Main1}]
Consider the $x$ direction first. 
Denote $r_0=6\delta, r_1=\sqrt{r^2-\delta^2}$. Then for any $x_0\in (-2\delta, 2\delta)$, we have
\begin{align}\begin{split}\label{eq-Radius}
-r_1<x_0-r_0< & x_0+r_0 <r_1,\\
x_0+\frac{r_0}{2}&\geq \delta,\\
x_0-\frac{r_0}{2}&\leq -\delta.
\end{split}
\end{align}

On each integral curve of $\frac{\partial}{\partial x}$ in $[-2\delta, 2\delta] \times \delta I$, we check whether $f=u_x^2$ has a zero.

 If not on the segment $[-2\delta, 2\delta]\times \{y_0\}$, we apply Corollary \ref{cor-MainEstm} to show $u$ is $C^{2,1}$ in $x$ on $[-\delta, \delta]\times \{y_0\}$  and \eqref{eq-C21S} holds with $s=\delta$. 
 
 If there is any zero on the segment $[-2\delta, 2\delta]\times \{y_0\}$, say $(x_0, y_0)$, then we consider  the interval $[x_0-r_0, x_0+r_0] \times \{y_0\} \subseteq B(P, r)$ by \eqref{eq-Radius}, and apply Theorem \ref{thm-C21} to show $u\in C^{2,1}$ in $x$ on the segment $[x_0-\frac{r_0}{2}, x_0+\frac{r_0}{2}] \times \{y_0\}$, which contains $[-\delta, \delta]\times \{y_0\}$ by \eqref{eq-Radius}. \eqref{eq-C21S} holds with $s=\frac{r_0}{2} >\delta$. 
 
 Hence $u_{xx}$ exists everywhere in $\delta I\times \delta I$ and is Lipschitz in $x$ on every integral curve of $\frac{\partial}{\partial x}$. In addition, by Lemma \ref{lem-Meas}, $u_{xxx}$ exists almost everywhere in $\delta I\times \delta I$, and has uniform bound \eqref{eq-C21S} with $s= \delta$. 
Consider the regulation of $u$ using the mollifier \eqref{eq-Rho},
 \begin{align*}
 u_\tau(x, y)= \tau^{-2} \rho(\frac{x}{\tau},  \frac{y}{\tau}) * u(x, y).
 \end{align*}
By Lemma \ref{lem-Smooth}, $|(u_\tau)_{xxx}| \leq C(r, g)$ in $B(P, \frac{\delta}{2})$, for any $\tau< dist( \partial (\delta I\times \delta I), \partial B(P, \frac{\delta}{2} ))$.

We have similar results in the $y, z, w$ direction. In the ball $B(P, \frac{\delta}{2})$, $(u_\tau)_{xxx}$, $(u_\tau)_{yyy}, $ $(u_\tau)_{zzz},$ $ (u_\tau)_{www}$ have a uniform bound which is independent of $\tau$. By \eqref{eq-UZZZ}, $(u_\tau)_{xxy}$, $(u_\tau)_{xyy},$ have a uniform bound as well. Now that $u_\tau$ has a uniform $C^3$ bound, so we can apply the Arzela-Ascoli Theorem to derive $u \in C^{2,1}(B(P, \frac{\delta}{2}))$.
\end{proof}

\appendix 

\section{Calculus lemmas}\label{Appen-CalculusL}
\begin{lem}\label{lem-FGC2}
Assume that $f$ satisfies conditions of Theorem \ref{thm-MainEstm}. Then
for $g=\frac{f}{x^2}, h=\frac{f^\prime}{x}$, we have $g, h \in C^2(I)$, and 
\begin{align*}
||g||_{C^2(I)}+ ||h||_{C^2(I)}\leq C ||f||_{C^4(I)},
\end{align*}
for some universal constant $C$.
\end{lem}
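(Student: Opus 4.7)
The plan is to write $f$ and $f^\prime$ via their integral Taylor remainders at the origin and then express $g$ and $h$ as integrals of $f^{\prime\prime}$, from which $C^2$ regularity and the quantitative bound follow by differentiating under the integral sign.

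First I would verify the vanishing conditions at the origin. Since $u_x(0)=0$ we have $f(0)=u_x(0)^2=0$. Because $f=u_x^2\ge 0$ and $f(0)=0$, the origin is a minimum of $f$, so $f^\prime(0)=0$ (alternatively, this also follows from $f^\prime(x)x\ge 0$ combined with continuity of $f^\prime$). With $f(0)=f^\prime(0)=0$ and $f\in C^4$, the integral form of Taylor's remainder gives
\begin{align*}
f(x)&=\int_0^x (x-t)\,f^{\prime\prime}(t)\,dt,\\
f^\prime(x)&=\int_0^x f^{\prime\prime}(t)\,dt.
\end{align*}

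Next I would apply the substitution $t=xs$, which works uniformly for $x$ of either sign, to pull the powers of $x$ out of the integrals:
\begin{align*}
f(x)=x^2\int_0^1 (1-s)\,f^{\prime\prime}(xs)\,ds,\qquad f^\prime(x)=x\int_0^1 f^{\prime\prime}(xs)\,ds.
\end{align*}
Dividing by $x^2$ and $x$ respectively yields, for every $x\in I$ (including $x=0$ by continuity),
\begin{align*}
g(x)=\int_0^1 (1-s)\,f^{\prime\prime}(xs)\,ds,\qquad h(x)=\int_0^1 f^{\prime\prime}(xs)\,ds.
\end{align*}

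Finally, since $f^{\prime\prime}\in C^2(I)$, I would differentiate under the integral twice. This produces
\begin{align*}
g^\prime(x)&=\int_0^1 (1-s)\,s\,f^{\prime\prime\prime}(xs)\,ds,\qquad g^{\prime\prime}(x)=\int_0^1 (1-s)\,s^2\,f^{(4)}(xs)\,ds,\\
h^\prime(x)&=\int_0^1 s\,f^{\prime\prime\prime}(xs)\,ds,\qquad\quad\ \ h^{\prime\prime}(x)=\int_0^1 s^2\,f^{(4)}(xs)\,ds.
\end{align*}
Each integrand is a bounded continuous function of $x$, so $g,h\in C^2(I)$, and the trivial bound $|f^{(k)}(xs)|\le\|f\|_{C^4(I)}$ for $k\le 4$ and $xs\in I$ gives $\|g\|_{C^2(I)}+\|h\|_{C^2(I)}\le C\|f\|_{C^4(I)}$ for a universal constant $C$.

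This argument is essentially mechanical; the only substantive point is the observation that $f(0)=f^\prime(0)=0$, which is what allows $f/x^2$ and $f^\prime/x$ to be rewritten as integrals with no singularity. The sign hypothesis $f^\prime(x)x\ge 0$ and the $C^4$ hypothesis on $f$ in the ambient interval $2I$ are used only through these two identities; the detailed structure of $f$ near $0$ never enters.
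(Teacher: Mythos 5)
Your argument is correct. Both you and the paper start from the same observation---$f(0)=f^{\prime}(0)=0$, so a Taylor expansion of $f$ at the origin with integral remainder controls the quotients $f/x^2$ and $f^{\prime}/x$---but the execution differs in a way worth noting. The paper keeps the fourth-order expansion with its iterated-integral remainder, divides by $x^2$ (resp.\ $x$), differentiates twice, and is then left with terms of the form $x^{-k}\int\cdots\int f^{(4)}$ whose boundedness is clear but whose continuity at $x=0$ has to be argued separately via L'Hospital's rule. You instead use the second-order remainder $f(x)=\int_0^x (x-t)f^{\prime\prime}(t)\,dt$ and rescale by $t=xs$, which removes the singular factor entirely and exhibits $g$ and $h$ as parameter integrals $\int_0^1(1-s)f^{\prime\prime}(xs)\,ds$ and $\int_0^1 f^{\prime\prime}(xs)\,ds$; differentiation under the integral sign (legitimate since $f^{\prime\prime}\in C^2$ and the integrands are jointly continuous) then gives $C^2$ regularity and the bound $\|g\|_{C^2(I)}+\|h\|_{C^2(I)}\le C\|f\|_{C^4(I)}$ in one stroke, with no separate continuity argument at the origin. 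Your route is the cleaner and more robust of the two; the only hypothesis you use, as you say, is $f(0)=f^{\prime}(0)=0$, and your justification of $f^{\prime}(0)=0$ (interior minimum of a nonnegative function, or the sign condition $f^{\prime}(x)x\ge 0$) is sound.
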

\begin{proof}
We can express
\begin{align*}
f(x)=\frac{ f^{\prime\prime}(0)}{2!} x^2 + \frac{ f^{\prime\prime \prime}(0)}{3!} x^3 + \int_0^x \int_0^{s_1} \int_0^{s_2} \int_0^{s_3} f^{(4)}(s_4) ds_4  ds_3  ds_2  ds_1.
\end{align*}
Then $g^{\prime\prime}$ includes terms like
\begin{align*}
&\frac{1}{x^2}  \int_0^{x} \int_0^{s_3} f^{(4)}(s_4) ds_4  ds_3 \\
&\frac{1}{x^3}  \int_0^{x} \int_0^{s_2} \int_0^{s_3} f^{(4)}(s_4) ds_4  ds_3  ds_2 \\
&\frac{1}{x^4} \int_0^x \int_0^{s_1} \int_0^{s_2} \int_0^{s_3} f^{(4)}(s_4) ds_4  ds_3  ds_2  ds_1
\end{align*}
which are all bounded by $||f||_{C^4(I)}$. In addition, as $x\rightarrow 0$, all these terms have limits by L'Hospital's rule, which shows $g^{\prime\prime}$ is continuous.

For $h$, the proof is similar. 

\end{proof}

\begin{lem}\label{lem-rC3}
If $k=k(x, y)$ lies in $C^3(B(O, \rho))$, and $k=\Psi(r)$, where $r=\sqrt{x^2+y^2}$, then $\Psi \in C^3([0, \rho))$.
\end{lem}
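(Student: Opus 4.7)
The plan is to exploit the identity $\Psi(r) = k(r, 0)$, valid for $r \in [0, \rho)$, which follows by setting $y = 0$ in $k(x, y) = \Psi(\sqrt{x^2 + y^2})$. Since $r \mapsto (r, 0)$ is a smooth embedding of $(0, \rho)$ into $B(O, \rho)$ and $k \in C^3$, the composition $\Psi$ lies in $C^3((0, \rho))$, with the explicit formulas
\[
\Psi'(r) = k_x(r, 0), \qquad \Psi''(r) = k_{xx}(r, 0), \qquad \Psi'''(r) = k_{xxx}(r, 0).
\]

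The only real issue is regularity at the endpoint $r = 0$. Each of the three functions on the right above is continuous on all of $B(O, \rho)$, so each has a finite limit as $r \to 0^+$. To upgrade these limits to honest one-sided derivatives of $\Psi$ at $0$, I would apply the following standard fact iteratively: if $F \in C([0, \rho))$ is differentiable on $(0, \rho)$ and $\lim_{r \to 0^+} F'(r) = L$ exists, then the right derivative $F'(0^+)$ exists and equals $L$ (immediate from the mean value theorem). Applying this to $F = \Psi$, then to $F = \Psi'$, then to $F = \Psi''$, yields values for $\Psi'(0), \Psi''(0), \Psi'''(0)$ matching the respective limits, and continuity of these derivatives on $[0, \rho)$ is then automatic.

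The proof has no real obstacle; it is essentially a statement about composition of a $C^3$ function with a smooth curve, plus a textbook boundary-extension step. As a sanity check, I would note that the radial symmetry forces the even extension of $r \mapsto k(r, 0)$ across $r = 0$ to agree with $k(r, 0)$, so the formulas above automatically give $\Psi'(0) = \Psi'''(0) = 0$, reflecting the expected compatibility conditions at the origin.
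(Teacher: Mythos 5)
Your proposal is correct and takes essentially the same approach as the paper, whose entire proof is the observation that $\Psi(r)=k(r,0)$. Your endpoint analysis at $r=0$ via the mean value theorem is sound but could be shortened by noting that $x\mapsto k(x,0)$ is already $C^3$ on the full interval $(-\rho,\rho)$, so its restriction to $[0,\rho)$ carries its one-sided derivatives at $0$ for free.
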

\begin{proof}
It follows directly from the fact that $\Psi(r)= k(r, 0)$.
\end{proof}

Denote $T=  \frac{\partial}{\partial x_1}$, a tangential vector on $\mathbb{R}^n$. Then it's integral curves are lines $(x_2, x_3, \cdots, x_n)= const$. Then we have the following lemma,
\begin{lem}\label{lem-WeakD}
Assume $w$ is a measurable function on $\Omega \subseteq\mathbb{R}^n$, and absolute continuous on every integral curves of $T$. In addition, $Tw$ exists almost everywhere, and it is integrable in $\Omega$. Then $Tw$ is a derivative of $w$ in the weak sense, i.e., for any smooth function $v$ which has compact support in $\Omega$,
\begin{align*}
\int_{\Omega}Tw\cdot v dx=- \int_{\Omega} w \cdot Tv dx
\end{align*}
\end{lem}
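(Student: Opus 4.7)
The plan is to reduce everything to the one-dimensional integration by parts formula along the integral curves of $T$, and then reassemble using Fubini's theorem. Write points of $\mathbb{R}^n$ as $x = (x_1, x')$ with $x' = (x_2, \ldots, x_n)$. Fix a bounded open box $Q' \subset \mathbb{R}^{n-1}$ and a compact interval $[a,b] \subset \mathbb{R}$ such that $\mathrm{supp}(v) \subset [a,b] \times Q' \subset \Omega$. For each fixed $x' \in Q'$, the slice $w(\cdot, x')$ is absolutely continuous on $[a,b]$ by hypothesis, and $v(\cdot, x')$ is smooth with compact support inside $(a,b)$.

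First I would apply the classical one-dimensional integration by parts formula for absolutely continuous functions: for each $x' \in Q'$,
\begin{equation*}
\int_a^b (Tw)(x_1, x')\, v(x_1, x')\, dx_1 = -\int_a^b w(x_1, x')\, (Tv)(x_1, x')\, dx_1,
\end{equation*}
where the boundary contributions $[wv]_a^b$ vanish because $v(a, x') = v(b, x') = 0$. This uses only that $w(\cdot, x')$ is absolutely continuous and that its pointwise derivative in $x_1$ coincides with the classical derivative almost everywhere on $[a,b]$, which is exactly the content of the absolute continuity hypothesis combined with the assumption that $Tw$ exists a.e.

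Next I would integrate both sides of the above identity over $x' \in Q'$ and apply Fubini's theorem to rewrite the iterated integrals as integrals over $\Omega$. The applicability of Fubini is guaranteed by the integrability hypotheses: $Tw \cdot v$ is integrable because $v$ is bounded with compact support and $Tw \in L^1_{\mathrm{loc}}$; and $w \cdot Tv$ is integrable because $Tv$ is bounded with compact support and $w$, being absolutely continuous on each line segment contained in $[a,b] \times \{x'\}$, is locally bounded on each such segment and hence locally integrable on $[a,b] \times Q'$ (measurability of $w$ on the slab plus the slicewise $L^\infty$ bounds from absolute continuity give an $L^1$ bound via Fubini after truncating $w$ by its slicewise supremum, which is itself measurable).

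The only delicate point, and the one I expect to require the most care, is justifying that $w$ is integrable on $\mathrm{supp}(v)$ so that $\int_\Omega w \cdot Tv\,dx$ actually makes sense. In practice this follows from the absolute continuity along $x_1$-lines since such functions are bounded on bounded intervals, and Fubini then ensures integrability over the slab $[a,b]\times Q'$; if one wants to be completely rigorous, one can truncate $w$ by $w_N := \max(-N,\min(N,w))$, prove the weak derivative identity for $w_N$ by dominated convergence (using that $Tw_N = Tw \cdot \chi_{\{|w|<N\}}$ almost everywhere by the chain rule for absolutely continuous functions), and then pass to the limit $N \to \infty$ using the hypothesized integrability of $Tw$. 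Apart from this technicality, the argument is a direct combination of the one-variable fundamental theorem of calculus and Fubini's theorem.
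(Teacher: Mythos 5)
Your proposal is correct and follows essentially the same route as the paper: reduce to the one--dimensional integration by parts formula for absolutely continuous functions along the integral curves of $T$, then reassemble with Fubini's theorem. The extra care you take with the integrability of $w\cdot Tv$ (via slicewise boundedness and, if needed, truncation) is a reasonable tightening of a point the paper passes over silently, but it does not change the argument.
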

\begin{proof}
We need to show the one-dimensional case, then the general case is done by the Fubini's theorem. Denote $T= \frac{\partial}{\partial x}$. By the assumption, $w$ is a continuous function, and $Tw$ exists almost everywhere.

It's integration by parts. In fact,
\begin{align*}
T(wv)= Tw \cdot v+ w \cdot Tv.
\end{align*}
$wv$ is absolute continuous so we can integrate the equation using the fundamental theorem of calculus.
\end{proof}

Consider a mollifier, for $x\in \Omega \subseteq \mathbb{R}^n$,
\begin{align}\label{eq-Rho}
\rho(x)= c \exp\left( \frac{1}{|x|^2-1} \right),
\end{align}
when $|x|< 1$, and $\rho(x)=0$ when $|x|\geq 1$. Here $c$ is selected such that $\int_{\mathbb{R}^n} \rho(x) dx=1$.  For any $w \in L^1(\Omega)$ and $\tau>0$, the regulation of $w$ is defined to be
\begin{align*}
w_\tau(x)= \tau^{-n} \int_{\Omega} \rho \left(  \frac{x-y}{\tau}   \right) w(y)dy,
\end{align*}
where $\tau< dist(x, \partial \Omega)$. Then $w_\tau$ is a smooth function in a domain $\Omega^\prime$, if $\overline{\Omega^\prime} \subseteq \Omega$ and $\tau< dist(\partial \Omega^\prime, \partial \Omega)$.

\begin{lem}\label{lem-Smooth}
Assume that $w$ satisfies the assumption of Lemma \ref{lem-WeakD}. In addition, 
$|Tw|< C$ in $\Omega$. Then $|T w_\tau|< C$ in a domain $\Omega^\prime$, if $\overline{\Omega^\prime} \subseteq \Omega$ and $\tau< dist(\partial \Omega^\prime, \partial \Omega)$.
\end{lem}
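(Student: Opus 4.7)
The plan is to establish the commutation identity $T w_\tau = (Tw)_\tau$ on $\Omega'$, and then use the fact that the mollifier is an $L^\infty$-contraction (since $\rho \geq 0$ and $\int \rho = 1$). Once that identity is in hand, the pointwise bound $|Tw| < C$ on $\Omega$ transfers directly to the same pointwise bound on $Tw_\tau$.

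First, I would move $T = \partial/\partial x_1$ inside the mollifier integral (justified by smoothness of $\rho$ and dominated convergence), obtaining
\begin{equation*}
Tw_\tau(x) = \tau^{-n}\int_{\Omega} T_x\rho\!\left(\tfrac{x-y}{\tau}\right) w(y)\, dy = -\tau^{-n}\int_{\Omega} T_y\rho\!\left(\tfrac{x-y}{\tau}\right) w(y)\, dy,
\end{equation*}
where the sign flip uses the chain-rule identity $T_x\rho((x-y)/\tau) = -T_y\rho((x-y)/\tau)$ coming from the fact that $\rho$ depends only on the difference $x-y$. For $x \in \Omega'$ and $\tau < \mathrm{dist}(\partial\Omega', \partial\Omega)$, the function $v(y) := \tau^{-n}\rho((x-y)/\tau)$ is smooth and has support contained in $\overline{B(x,\tau)} \subseteq \Omega$, so it is an admissible test function for Lemma \ref{lem-WeakD}. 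Applying that lemma (with $T$ acting in the $y$ variable) gives
\begin{equation*}
Tw_\tau(x) = \tau^{-n}\int_{\Omega}\rho\!\left(\tfrac{x-y}{\tau}\right) Tw(y)\, dy = (Tw)_\tau(x).
\end{equation*}
Taking absolute values, inserting the a.e. bound $|Tw| < C$, and using $\int \rho = 1$ then yields $|Tw_\tau(x)| < C$, which is the claim.

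The only point requiring care is verifying the support condition on $v$ before invoking Lemma \ref{lem-WeakD}; this is precisely what the hypothesis $\tau < \mathrm{dist}(\partial\Omega', \partial\Omega)$ guarantees. There is no analytic difficulty beyond this: the argument is the standard "mollifier commutes with weak derivative" computation, here applied to the distinguished tangential vector field $T = \partial/\partial x_1$ rather than to a full gradient. I do not anticipate any serious obstacle.
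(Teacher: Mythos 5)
Your proof is correct, but it reaches the key identity $Tw_\tau=(Tw)_\tau$ by a different mechanism than the paper. You differentiate under the integral sign, pass $T_x$ to $-T_y$ on the mollifier kernel, and then invoke the integration-by-parts identity of Lemma \ref{lem-WeakD} with the test function $v(y)=\tau^{-n}\rho((x-y)/\tau)$, whose admissibility is exactly what the hypothesis $\tau<\mathrm{dist}(\partial\Omega^\prime,\partial\Omega)$ secures. The paper instead never cites Lemma \ref{lem-WeakD} here: it writes $Tw_\tau$ as a limit of difference quotients inside the integral, uses absolute continuity along integral curves of $T$ to represent each difference quotient as $\frac{1}{\epsilon}\int_0^\epsilon Tw(\cdot+se_1)\,ds$, bounds this by $C$ uniformly in $\epsilon$, and concludes by dominated convergence. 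Your route is more modular --- it actually puts Lemma \ref{lem-WeakD} to work and is the standard ``mollification commutes with weak derivatives'' computation --- while the paper's route makes visible exactly where absolute continuity (as opposed to mere continuity) enters, which is the point of the Cantor-function remark following the lemma. One hypothesis you use implicitly when invoking Lemma \ref{lem-WeakD} is the integrability of $Tw$ on $\Omega$; this is supplied by the assumed bound $|Tw|<C$ on the bounded domain, so there is no gap.
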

\begin{proof}
\begin{align*}
T w_\tau(x)= \tau^{-n} \lim_{\epsilon\rightarrow 0} \int_{\Omega} \rho \left(  \frac{x}{\tau}   \right) \cdot \frac{w(x-y+\epsilon e_1)- w(x-y)}{\epsilon} dy,
\end{align*}
where $e_1=(1, 0, \cdots, 0) \in \mathbb{R}^n$.
Since $w$ is absolute continuous on the integral curves of $T$, 
\begin{align*}
\left\vert \frac{w(x-y+\epsilon e_1)- w(x-y)}{\epsilon} \right\vert = \left\vert \frac{1}{\epsilon}\int_0^\epsilon Tw(x-y+se_1)ds \right\vert<C.
\end{align*}
By the dominated convergence theorem,
\begin{align*}
Tw_\tau (x) = \tau^{-n} \int_{\Omega} \rho \left(  \frac{x}{\tau}   \right) \cdot Tw(x-y) dy,
\end{align*}
and has uniform bound $C$.
\end{proof}
If we reduce the absolute continuity condition to being continuous on integral curves of $T$, then the lemma is not right.
Cantor function in the one dimensional case is a counterexample.

\bigskip

\end{document}